
\documentclass[a4,12pt,twosided,reqno]{amsart}

\usepackage{amscd,amssymb}
\usepackage[arrow,matrix]{xy}
\usepackage{graphicx}
\usepackage{color}

\topmargin=0.1in \textwidth5.9in \textheight7.85in
\oddsidemargin=0.3in \evensidemargin=0.3in \theoremstyle{plain}

\newtheorem{theorem}{Theorem}[section]
\newtheorem{lemma}{Lemma}[section]

\newtheorem{cor}{Corollary}[section]
\newtheorem{defn}{Definition}[section]
\newtheorem{ex}{Example}[section]
\theoremstyle{remark}
\newtheorem{rem}{Remark}[section]




\begin{document}

\begin{center}
{\Large {\bf Some Unique Fixed Point Theorems in
 Multiplicative Metric Space}}
\end{center}
\vskip.3cm
\begin{center}
{\bf {Muhammad Sarwar and Badshah-e-Rome }}
\end{center}
\vskip+0.5cm

{\bf Abstract.}
\"{O}zavsar and Cevikel( Fixed point of multiplicative contraction mappings on multiplicative metric space.arXiv:1205.5131v1 [math.GN] 23 may 2012)initiated the concept of the multiplicative metric space in such a way  that the usual  triangular inequality is replaced by ``multiplicative triangle inequality  $d(x,y)\leq d(x,z).d(z,y)$ for all $x,y,z\in X$".
In this manuscript, we discussed  some unique fixed point theorems  in the context of  multiplicative metric spaces. The established results carry  some well known results from the literature to multiplicative metric space.
 we note that  some fixed point theorems can be deduced in multiplicative metric space by using the established results.
Appropriate examples are also given.

\vskip+0.2cm
{\bf 2000 Mathematics Subject Classification:} 47H10, 54H25, 55M20.
\vskip+0.2cm
{\bf Key Words and Phrases}: Complete multiplicative metric space, multiplicative contraction mapping, multiplicative Cauchy sequence, fixed point.

\section{Introduction}
 Banach contraction principle has been a
 very advantageous and efficacious means in nonlinear analysis. Various authors have generalized Banach contraction principle in different spaces. Singxi \textit{et al}. \cite{B2} and Sastry \textit{et al.} \cite{B3} studied some common fixed point theorems for different mappings on a 2-metric space. Dhage \cite{B5} proved fixed point results in D-metric space. Branciari \cite{1} introduced the concept of generalized metric spaces by putting a  general inequality condition in place of usual triangular inequality in metric space.  Gu \textit{et al}.\cite{B1} derived  some common
fixed point theorems related to weak commutative mappings on a complete metric space. Furthermore  Mustafa and Sims \cite{B7,B8} studied various results on the class of
generalized metric spaces.  Agarwal \textit{et al.} \cite{B4} discussed some fixed point results related to monotone operators in the setting of metric space equipped with a partial order using a weak generalized contraction-type mapping. Suzuki \cite{B6} characterized metric completeness by  generalized
 Banach contraction principle. Aydi \textit{et al}. \cite{2} obtained  fixed point results for weak contractive
mappings in complete Hausdorff generalized metric spaces.

\"{O}zavsar and Cevikel\cite{A} introduced the concept of multiplicative contraction mappings and proved some fixed point theorems of such mappings on a complete multiplicative metric space. They also gave some topological properties of the relevant multiplicative metric space. Hxiaoju \textit{et al}.\cite{He} studied  common fixed points for weak commutative mappings on a multiplicative metric
space.
 For further details about multiplicative metric
 space and related concepts, we refer the reader to\cite{mutip, A}. Fixed-point theory and self mappings satisfying certain contraction conditions has many applications and has been an important area of various research activities \cite{B4,C,D,E,F}.

 In the current article we studied  some unique  fixed point theorems  in the setting of multiplicative metric space. The derived results carry  fixed point results  of  Zamfirescu \cite{A1}  and  Aydi \textit{et al} \cite{2}   to multiplicative metric spaces.
\section{Prelimenaries}
\begin{defn}\cite {mutip}  Multiplicative metric on a nonempty set $X$ is a mapping \\$ d: X\times\ X \to R $ satisfying the following condition:

 $(1)\  \ d(x,y)\geq 1 \text{  for all } x,y\in X;$

 $ (2)\  \ d(x,y) = 1 \ \ \  \hbox{if and only if } x=y;$

 $(3)\  \ d(x,y) = d(y,x)\text{ for all } x,y\in X;$

$(4)\  \ d(x,z)\leq d(x,y).d(y,z) \text{ for all } x,y,z\in X.$ And the pair $(X, d)$ is called multiplicative metric space.
\end{defn}

\begin{ex}
 Let $R^n_+$  be the collection of all $n-$tupples of positive real numbers. And let \\$ d^*: R^n_+\times R^n_+\to R$  be defined as\\
 \begin{equation*}d^*(x,y)=|\frac{x_1}{y_1}|^*\cdot |\frac{x_2}{y_2}|^*\cdots |\frac{x_n}{y_n}|^*
   \end{equation*} where $x= (x_1,x_2\cdots x_n),y= (y_1,y_2\cdots y_n)\in R^n_+ \text{  and  } |\cdot|^*: R_+\to R_+ $ is defined as follows \begin{equation*}|a|^*=\begin{cases} a & \text{if $a\geq 1$}\\ \frac{1}{a} & \text{if $a<1$}\end{cases}
 \end{equation*}Then clearly $d^*(x,y)$ is a multiplicative metric introduced in \cite{mutip}.
\end{ex}

\begin{ex}
Let $(X,d)$ be a metric space, then the mapping $d_a$ defined on $X$ as follows is multiplicative metric, $d_a(x,y)=a^{d(x,y)}$ where $a>1$ is a real number. For discrete metric $d$ the corresponding mapping $d_a$ called discrete multiplicative metric is defined as:

$d_a(x,y)=a^{d(x,y)}=\begin{cases} 1 & \text{if $x=y$}\\a & \text{if $x\neq y$}\end{cases}$
\end{ex}

\begin{ex}
Let $d: R\times R\to [1,\infty)$ be defined as $d(x,y)= a^{|x-y|}$ where $x,y\in R$ and $a>1$. Then $d(x,y)$ is multiplicative metric.
\end{ex}


\begin{rem}
  Neither every metric is multiplicative metric nor every multiplicative metric is metric. The mapping $ d^*$ defined above is multiplicative metric but not metric as it doesn't satisfy triangular inequality. Consider
  $ d^*(\frac{1}{3},\frac{1}{2})+d^*(\frac{1}{2},3)=\frac{3}{2}+6=7.5<9=d^*(\frac{1}{3},3)$. On the other hand the usual metric on $R$ is not multiplicative metric as it doesn't satisfy multiplicative triangular inequality. As $d(2,3)\cdot d(3,6)=3<4= d(2,6)$
\end{rem}

\begin{defn}\cite{A} Let $(X, d)$ be a multiplicative metric space. Then we have the following inequality
 \begin{equation*}|\frac{d(x,z)}{d(y,z)}|^*\leq d(x,y)\ \  \  \hbox{for all}\ \ x, y,z\in X,
 \end{equation*}
 which  is called multiplicative reverse triangular inequality.
\end{defn}
\begin{defn}\cite{A} Let $(X,d)$ be a multiplicative metric space. If $ a\in X $ and $ r >1  $ then  a subset
$$B _r(a)=B(a;r)=\{ x\in X: d(a,x)<r\}$$
 of $ X $ is called multiplicative open ball centered at $ a $  with radius $ r $. Analagusly one can defined multilicative closed ball as
 $$\bar{B} _r(a)=\bar{B}(a;r)=\{ x\in X: d(a,x)\leq r\}.$$
  \end{defn}
  \begin{defn}\cite{A} Let $ A $ be any subset of a multiplicative metric space $(X,d)$. A point $x\in X$ is called limit point of $ A $
  if and only if $ (A \cap B _\epsilon(x))-\{x\} \neq \phi$ for every $\epsilon > 1$
  \end{defn}

 \begin{defn}\cite{A}
 Let $(X,d) \text{ and } (Y,\rho) $ be given multiplicative metric spaces and $a\in X$. A function $f:(X,d)\to (Y,\rho) $ is said to multiplicative continuous at $a$, if for given $\epsilon > 1$, there exists a $\delta> 1$ such that $ d(x,a)< \delta \Rightarrow d(f(x),f(a))<\epsilon $ or equivalently $f(B(a;\delta))\subset B(f(a);\epsilon)$. Where $B(a;\delta) \text{ and }B(f(a);\epsilon) $ are open balls in  $(X,d) \text { and } (Y,\rho)$ respectively. The function $ f$ is said to be continuous on $X $ if it is continuous at each point of $X$.
 \end{defn}

 \begin{defn}\cite{A}
  A sequence $\{x_n\}$ in a multiplicative metric space $(X,d)$ is said to be  multiplicative convergent to a point $x\in X$ if for a given $\epsilon>1$ there exits a positive integer $n_0$ such that  $d(x_n,x) < \epsilon \text{ for all } n\geq n_0 $ or equivalently, if for every multiplicative open ball $B_\epsilon(x)$ there exists a positive integer $n_0$ such that $n\geq n_0 \Rightarrow x_n \in B_\epsilon(x)  $ then the sequence $\{x_n\}$ is said to be  multiplicative convergent to a point $x\in X$ denoted by $x_n \rightarrow x (n \rightarrow \infty).$
 \end{defn}

  \begin{defn}\cite{A}  A sequence $\{x_n\}$ in a multiplicative metric space $(X,d)$ is said to be  multiplicative Cauchy sequence if for all   $\epsilon>1$ there exits a positive integer $n_0$ such that $$d(x_n,x_m) < \epsilon \text{ for all } n,m\geq n_0. $$
  \end{defn}

  \begin{defn}\cite{A} A multiplicative metric space $(X,d)$ is said to be complete if every multiplicative Cauchy sequence in $X \text{converges in } X. $
  \end{defn}

  \begin{defn}\cite{A} Let $(X, d)$ be a multiplicative metric space. A mapping $ f : X \to X $ is called multiplicative contraction if there exists a real constant $\lambda\in [0, 1)$ such that
$$d(f(x_1), f(x_2)) \leq d(x_{1}, x_{2})^\lambda \text{ for all } x, y \in X$$.
\end{defn}

\begin{theorem}\cite{A} In a multiplicative metric space every multiplicative convergent sequence is multiplicative Cauchy sequence. \end{theorem}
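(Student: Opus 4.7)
The plan is to mirror the classical argument that convergent sequences in a metric space are Cauchy, with the usual additive $\epsilon/2$ trick replaced by a multiplicative $\sqrt{\epsilon}$ trick dictated by the multiplicative triangle inequality.

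Concretely, I would start by fixing a multiplicative convergent sequence $\{x_n\}$ in $(X,d)$ with limit $x \in X$, and by fixing an arbitrary $\epsilon > 1$. The key preliminary observation is that since $\epsilon > 1$ we also have $\sqrt{\epsilon} > 1$, so $\sqrt{\epsilon}$ is a legitimate radius for a multiplicative open ball. Applying the definition of multiplicative convergence to this $\sqrt{\epsilon}$, I obtain a positive integer $n_0$ such that $d(x_n, x) < \sqrt{\epsilon}$ for every $n \geq n_0$.

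Then for arbitrary $n, m \geq n_0$, I would invoke the multiplicative triangle inequality (axiom (4) in the definition of a multiplicative metric) together with symmetry to estimate
\begin{equation*}
d(x_n, x_m) \leq d(x_n, x) \cdot d(x, x_m) < \sqrt{\epsilon} \cdot \sqrt{\epsilon} = \epsilon.
\end{equation*}
Since $n, m \geq n_0$ were arbitrary, this is exactly the condition for $\{x_n\}$ to be a multiplicative Cauchy sequence, and the proof concludes.

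There is no real obstacle here; the statement is a direct transcription of a standard fact. The only point that requires any care is to choose the convergence threshold as $\sqrt{\epsilon}$ rather than $\epsilon$ itself, so that after multiplying two such distances via the multiplicative triangle inequality one lands precisely at $\epsilon$. Since the multiplicative metric takes values in $[1, \infty)$, taking square roots is unproblematic and preserves the strict inequality $> 1$, so no additional hypothesis on $X$ is needed beyond what the definition already supplies.
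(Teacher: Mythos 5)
Your proof is correct; the paper itself states this theorem without proof (citing \"{O}zavsar and Cevikel), and your argument --- convergence with threshold $\sqrt{\epsilon}$ followed by the multiplicative triangle inequality $d(x_n,x_m)\leq d(x_n,x)\cdot d(x,x_m)<\sqrt{\epsilon}\cdot\sqrt{\epsilon}=\epsilon$ --- is exactly the standard argument from the cited source. Nothing is missing.
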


  \begin{lemma}\cite{A} A sequence $\{x_n\}$ in a multiplicative metric space $(X,d)$ is multiplicative Cauchy sequence if and only if $ d(x_n,x_m)\rightarrow 1 (n,m \rightarrow \infty)$
   \end{lemma}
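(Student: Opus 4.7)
The plan is to prove both implications by directly unpacking the definition of a multiplicative Cauchy sequence and comparing it with what it means for the quantity $d(x_n,x_m)$ to multiplicatively converge to $1$. The key observation is that because $d$ takes values in $[1,\infty)$ (by axiom (1) of the multiplicative metric), the expression $|d(x_n,x_m)/1|^{*} = d(x_n,x_m)$, so the statement ``$d(x_n,x_m) \to 1$'' in the multiplicative sense is equivalent to saying that for every $\epsilon > 1$ there exists $n_0$ with $d(x_n,x_m) < \epsilon$ whenever $n,m \geq n_0$.

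For the forward direction, I would assume $\{x_n\}$ is multiplicative Cauchy and fix $\epsilon > 1$. By the definition given in Definition 2.7, there exists $n_0$ such that $d(x_n,x_m) < \epsilon$ for all $n,m \geq n_0$. Combined with axiom (1) we obtain $1 \leq d(x_n,x_m) < \epsilon$, which is precisely the assertion that $d(x_n,x_m) \to 1$ as $n,m \to \infty$.

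For the reverse direction, I would assume $d(x_n,x_m) \to 1$. Then for every $\epsilon > 1$, by the definition of multiplicative convergence applied to the real number $1$ (using $\epsilon$ itself as the tolerance in the multiplicative sense), there exists $n_0$ such that $d(x_n,x_m) < \epsilon$ for all $n,m \geq n_0$, which is exactly the Cauchy criterion from Definition 2.7.

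There is really no substantive obstacle here; the whole content of the lemma is a translation between two ways of writing the same quantifier statement, and the only point that needs care is making sure the reader sees why the multiplicative ``limit $= 1$'' statement collapses to the Cauchy inequality without absolute-value acrobatics, which follows immediately from $d \geq 1$. I would therefore keep the proof to a short paragraph per direction, explicitly invoking axiom (1) to justify dropping $|\cdot|^{*}$.
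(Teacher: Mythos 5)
Your proof is correct: since $d\geq 1$ forces $|d(x_n,x_m)/1|^{*}=d(x_n,x_m)$, the condition $d(x_n,x_m)\to 1$ is literally the same quantifier statement as the Cauchy criterion in Definition 2.10, and your two-directional unpacking establishes exactly that. The paper itself gives no proof of this lemma (it is quoted from \"{O}zavsar and Cevikel \cite{A}), and the definitional argument you give is the standard and essentially only route.
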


\section{Main results}
In this section we  prove  some unique fixed point theorems  in the setting of  multiplicative metric space.
\begin{theorem}\label{t2}
Let $(M,d)$ be a complete multiplicative metric space and let $\xi ,\eta ,\lambda $ be real numbers with $\xi\in [0,1)$ and $\eta ,\lambda\in[0,\frac{1}{2}) $ and  $T : M \rightarrow M $ a function such that for each pair of different points $x , y \in M$ at least one of the following conditions is satisfied:

$ (1)\  \ d(Tx,Ty)\leq (d(x,y))^\xi$

$(2)\  \ d(Tx,Ty)\leq \big(d(x,Tx).d(y,Ty)\big)^\eta$

$(3)\  \ d(Tx,Ty)\leq \big(d(x,Ty).d(y,Tx)\big)^\lambda$\\
Then T has a unique fixed point.
\end{theorem}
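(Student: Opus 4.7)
\smallskip

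\noindent\textbf{Proof proposal.} The plan is to imitate the classical Zamfirescu proof from ordinary metric spaces, translating each additive bound into its multiplicative analogue (powers in place of scalar multiples). Starting from an arbitrary $x_0\in M$, I would form the Picard iterates $x_{n+1}=Tx_n$ and aim to show that the ``one-step'' displacement $d(x_n,x_{n+1})$ decays geometrically in the multiplicative sense, that is, $d(x_n,x_{n+1})\le d(x_{n-1},x_n)^{k}$ for some $k\in[0,1)$ independent of $n$. Since at each step the pair $(x_{n-1},x_n)$ may satisfy any one of (1), (2), (3), I would examine the three cases separately: case (1) gives exponent $\xi$; case (2), after using $d(x_n,Tx_n)=d(x_n,x_{n+1})$ and solving, yields exponent $\eta/(1-\eta)$; case (3), after using $d(x_n,Tx_{n-1})=d(x_n,x_n)=1$ together with the multiplicative triangle inequality to bound $d(x_{n-1},x_{n+1})\le d(x_{n-1},x_n)\,d(x_n,x_{n+1})$, yields exponent $\lambda/(1-\lambda)$. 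Taking $k=\max\{\xi,\eta/(1-\eta),\lambda/(1-\lambda)\}<1$ (where the last two estimates are strict because $\eta,\lambda<\tfrac12$) handles every step uniformly.

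From $d(x_n,x_{n+1})\le d(x_0,x_1)^{k^n}$ and iterated use of the multiplicative triangle inequality, $d(x_n,x_m)\to 1$ as $n,m\to\infty$, so by the lemma of \cite{A} the sequence is multiplicative Cauchy, and completeness produces a limit $x^*\in M$. To identify $x^*$ as a fixed point, I would apply the hypothesis to the pair $(x_n,x^*)$ and, as above, split into the three possible cases. In case (1), $d(Tx_n,Tx^*)\le d(x_n,x^*)^\xi\to 1$, so $x_{n+1}=Tx_n\to Tx^*$ and limits coincide. In case (2), expanding $d(x^*,Tx^*)\le d(x^*,x_{n+1})\,d(Tx_n,Tx^*)$ and substituting the hypothesis gives $d(x^*,Tx^*)^{1-\eta}\le d(x^*,x_{n+1})\,d(x_n,x_{n+1})^{\eta}\to 1$; case (3) is analogous, combined with $d(x_n,Tx^*)\le d(x_n,x^*)\,d(x^*,Tx^*)$, producing $d(x^*,Tx^*)^{1-\lambda}\to 1$. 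Since for each $n$ at least one case must occur, passing to a subsequence along which the same case occurs infinitely often yields $d(x^*,Tx^*)=1$, i.e.\ $Tx^*=x^*$.

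For uniqueness, suppose $x\ne y$ are both fixed points. Then $d(x,Tx)=d(y,Ty)=1$ and $d(x,Ty)=d(y,Tx)=d(x,y)$. Applying whichever of (1)--(3) holds for the pair $(x,y)$ gives respectively $d(x,y)\le d(x,y)^\xi$, $d(x,y)\le 1$, and $d(x,y)\le d(x,y)^{2\lambda}$; since $\xi<1$ and $2\lambda<1$ and $d(x,y)\ge 1$, each option forces $d(x,y)=1$, a contradiction.

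The principal obstacle is bookkeeping rather than any single delicate estimate: the hypothesis does not fix \emph{which} of the three inequalities to use on a given pair, so I must carry out the contractivity analysis case by case both for the iterates and for the limit identification, and extract a subsequence in the latter step to ensure one case persists. The rest is translating additive manipulations into exponent algebra via the multiplicative triangle inequality.
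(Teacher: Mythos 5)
Your proposal is correct, and its skeleton (Picard iterates, the uniform exponent $k=\max\{\xi,\eta/(1-\eta),\lambda/(1-\lambda)\}<1$ obtained by a three-case analysis on consecutive iterates, the geometric-series bound giving a multiplicative Cauchy sequence, and the uniqueness argument showing each of (1)--(3) forces $d(x,y)=1$ for two fixed points) coincides with the paper's. Where you genuinely diverge is in identifying the limit $x^*$ as a fixed point. The paper argues by contradiction: assuming $d(z,Tz)=\epsilon>1$, it takes the ball $B=\{x:d(x,z)<\epsilon^{1/4}\}$, shows $d(x,Tz)>\epsilon^{3/4}$ for $x\in B$, and then checks that for $p$ large each of the three conditions applied to the pair $(T^px_0,z)$ would force $d(T^{p+1}x_0,Tz)<\epsilon^{3/4}$, contradicting $T^{p+1}x_0\in B$. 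You instead pass directly to the limit in the inequality for the pair $(x_n,x^*)$, isolating $d(x^*,Tx^*)^{1-\eta}$ or $d(x^*,Tx^*)^{1-\lambda}$ on the left and something tending to $1$ on the right, with a pigeonhole step to fix one case along a subsequence. Your route is cleaner: it avoids the choice of radius $\epsilon^{1/4}$ and the chain of strict inequalities, and it exposes that this step only needs $\eta,\lambda<1$ rather than $\eta,\lambda<\tfrac12$ (which the paper's ball argument uses when it replaces the exponent $\eta$ by $\tfrac12$). The paper's route, on the other hand, never divides by $d(x^*,Tx^*)^\eta$ and so sidesteps any worry about manipulating exponents of a quantity not yet known to be finite or nontrivial. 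One small point to patch in your write-up: the hypothesis applies only to pairs of \emph{distinct} points, so before invoking it for $(x_n,x^*)$ you should dispose of the case $x_n=x^*$ for infinitely many $n$ (there $d(x^*,Tx^*)=d(x_n,x_{n+1})\le d(x_0,x_1)^{k^n}\to 1$ directly); the paper's proof tacitly needs the same remark for the pair $(T^px_0,z)$.
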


\begin{proof}
Consider the number $\delta=max \{\xi, \frac{\eta}{1-\eta},\frac{\lambda}{1-\lambda}\}$. Obviously, $\delta<1.$ Now choose $x_0\in M$ arbitrarily and fix an integer $n \geq 0.$Take $x= T^nx_0$ and $y=T^{n+1}x_0$.
\\Let $x\neq y$, otherwise $x$ is a fixed point of $T$. If for these two points condition (1) is satisfied then $$d(T^{n+1}x_0,T^{n+2}x_0)\leq (d(T^nx_0,T^{n+1}x_0))^\delta.$$
If for $x$ and $y$ condition (2) is satisfied then
 \begin{eqnarray*}d(T^{n+1}x_0,T^{n+2}x_0)&\leq& \big(d(T^nx_0,T^{n+1}x_0).d(T^{n+1}x_0,T^{n+2}x_0)\big)^\eta\\
&=& \big (d(T^nx_0,T^{n+1}x_0)\big)^\eta.\big(d(T^{n+1}x_0,T^{n+2}x_0)\big)^\eta.
\end{eqnarray*}
Which implies that
$$ \big(d(T^{n+1}x_0,T^{n+2}x_0)\big)^{1-\eta}= \{d(T^nx_0,T^{n+1}x_0)\}^\eta.$$
Finally we have
$$ d(T^{n+1}x_0,T^{n+2}x_0)
=\big(d(T^nx_0,T^{n+1}x_0)\big)^{\frac{\eta}{1-\eta}}\leq \big(d(T^nx_0,T^{n+1}x_0)\big)^\delta.$$
 If for $x$ and $y$ condition (3) is satisfied then\\
\begin{multline*}
d(T^{n+1}x_0,T^{n+2}x_0)\leq \{d(T^nx_0,T^{n+2}x_0).d(T^{n+1}x_0,T^{n+1}x_0)\}^\lambda\\
= \big(d(T^nx_0,T^{n+2}x_0)\big)^\lambda.1\leq\big(d(T^nx_0,T^{n+1}x_0).d(T^{n+1}x_0,T^{n+2}x_0)\big)^\lambda.  
\end{multline*}
Hence one can get
$$d(T^{n+1}x_0,T^{n+2}x_0)
=\big(d(T^nx_0,T^{n+1}x_0)\big)^{\frac{\lambda}{1-\lambda}}\leq \big(d(T^nx_0,T^{n+1}x_0)\big)^\delta.$$
This inequality is true for every $n,$  therefore continuing the same  procedure  we have
\begin{equation*}d(T^{n+1}x_0,T^{n+2}x_0)\leq ((d(T^nx_0,T^{n+1}x_0))^\delta \leq (d(T^{n-1}x_0,T^nx_0)^{\delta^2}\cdots \leq (d(x_0,Tx_0)^{\delta^{n+1}}.
\end{equation*}
 Let $m,n\in N $ such that $ m>n$,  then using multiplicative triangular inequality we get
  \begin{eqnarray*} d(T^mx_0,T^nx_0)&\leq& d(T^mx_0,T^{m-1}x_0).d(T^{m-1}x_0,T^{m-2}x_0)\cdots d(T^{n+1}x_0,T^nx_0)\\&\leq& (d(Tx_0,x_0))^{\delta^{m-1}+\delta^{m-2}+\delta^{m-3}\cdots +\delta^n}\\&=& (d(Tx_0,x_0))^\frac{\delta^n-\delta^m}{1-\delta}.
  \end{eqnarray*}
  Since $\delta<1$, therefore
  \begin{equation*}\lim_{m,n\to\infty}d(T^mx_0,T^nx_0)=1.
   \end{equation*}
   Which shoes that  $\{ T^nx_0\} $ is multiplicative Cauchy sequence, and due the completeness of M, it converges to some $z\in M$.

 We now claim that $T(z)=z$. Suppose by way of contradiction that $T(z)\neq z $, then $d(Tz,z)=\epsilon> 1$ and consider the ball
 $$B = \{x\in M: d(x,z)< \epsilon^{\frac{1}{4}} \}$$.
  Now for any $x\in B$ we have
 $d(z,Tz)\leq d(z,x).d(x,Tz).$
  Which implies that $d(x,Tz)> \epsilon^\frac{3}{4}$. As $\{ T^nx_0\}$ converges to $z$ so every ball with center $z$ contains all but finite number of terms of $\{ T^nx_0\}$, hence there must be a natural number $p$ such that $ T^nx_0\in B \   \ \forall n\geq p .$\\
   According to the hypothesis of the theorem for the two points $T^px_0$ and $z$ at least one of the following  three conditions must be satisfied:

 $(1^{\prime})$ \  \  \ \  \ $d(T^{p+1}x_0,Tz)\leq (d(T^px_0,z))^\xi$

 $ (2^{\prime})$  \  \  \ \  \ $  d(T^{p+1}x_0,Tz)\leq \big(d(T^px_0,T^{p+1}x_0).d(z,Tz)\big)^\eta$

  $(3^{\prime})$\  \  \ \  \ \ $ d(T^{p+1}x_0,Tz)\leq \big(d(T^px_0,Tz) \cdot d(T^{p+1}x_0 ,z)\big)^\lambda$

 condition $(1^{\prime})$ is not possible because
 $$(d(T^px_0,z))^\xi<d(T^px_0,z)<\epsilon^\frac{1}{4}<\epsilon^\frac{3}{4}<d(T^{p+1}x_0,Tz)\hspace{1.5cm}\because T^{p+1}x_0\in B$$

 and
 \begin{multline*}
 \big(d(T^px_0,T^{p+1}x_0).d(z,Tz)\big)^\eta<\big(d(T^px_0,T^{p+1}x_0).d(z,Tz)\big)^\frac{1}{2}\\
 \leq
 \big(d(T^px_0,z).d(z,T^{p+1}x_0).d(z,Tz)\big)^\frac{1}{2}
<(\epsilon^\frac{1}{4}.\epsilon^\frac{1}{4}.\epsilon)^\frac{1}{2}=\epsilon^\frac{3}{4}<d(T^{p+1}x_0,Tz)
\end{multline*}
contradits condition $(2^{\prime}).$

Similarly  the following lines  contradicting condition $(3^{\prime})$
 \begin{multline*} \big(d(T^px_0,Tz)\cdot d(T^{p+1}x_0 ,z)\big)^\lambda<\big(d(T^px_0,Tz)\cdot d(T^{p+1}x_0 ,z)\big)^\frac{1}{2}\\
 \leq\big(d(T^px_0,z)\cdot d(z,Tz) \cdot d(T^{p+1}x_0 ,z)\big)^\frac{1}{2}
 <(\epsilon^\frac{1}{4}\cdot\epsilon\cdot\epsilon^\frac{1}{4})^\frac{1}{2}=\epsilon^\frac{3}{4}<d(T^{p+1}x_0,Tz)
 \end{multline*}
 Therefore  none of the three conditions is satisfied for $T^px_0$ and $z$.

 Hence $T(z)=z$.

 \textbf{Uniqueness:} Suppose that $z^\prime\neq z $ is another fixed point of $T$ for some $z^\prime\in M $, i.e  $T(z^\prime)=z^\prime$. Since $d$ is multiplicative metric, therefore $d(z, z^{\prime})>1.$

 For $z, z^{\prime}\in M$ Clearly we have
  $$d(Tz,Tz^\prime)>d(z,z^\prime)^\xi \   \ \mbox{  for all } \xi\in [0,1)\ ,$$
  $$ d(Tz, Tz^{\prime})> d(z,Tz).d(z^\prime,Tz^\prime)^{\eta} \ \ \text{ for all } \eta\in[0,\frac{1}{2})$$
  and
  \begin{multline*}
   d(Tz,Tz^\prime)= d(Tz,Tz^\prime)^\frac{1}{2}.d(Tz,Tz^\prime)^\frac{1}{2}\\=\{d(Tz,z^\prime).d(z,Tz^\prime\}^\frac{1}{2}>\{d(Tz,z^\prime).d(z,Tz^\prime)\}^\lambda \    \  \text{ for all } \lambda\in[0,\frac{1}{2}).
  \end{multline*}
Thus none of the three conditions of the theorem is satisfied for the points  $z$ and  $ z^\prime$ of $M$. Hence fixed point of $T$ is unique.
 \end{proof}
 Theorem \ref{t2} yields the following corollaries.

\begin{cor}\label{c1} Let $M$ be a complete multiplicative metric space, $\xi\in [0,1) \text{ and let } T: M\to M $ be a function such that for each pair of distinct points of $M $ condition (1) of Theorem \ref{t2} is satisfied, Then $T$ has a unique fixed point.\end{cor}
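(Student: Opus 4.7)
The plan is to deduce Corollary \ref{c1} as an immediate specialization of Theorem \ref{t2}. The hypothesis of Theorem \ref{t2} only requires that \emph{at least one} of conditions (1), (2), (3) holds for every pair of distinct points $x,y \in M$. In the setting of the corollary we are handed something stronger: condition (1) holds for every such pair. Fixing any admissible constants (for instance $\eta=\lambda=0$), the hypothesis of Theorem \ref{t2} is therefore automatically satisfied, and its conclusion yields a unique fixed point of $T$. This is the whole argument.

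If a self-contained proof is preferred, the strategy is to mirror the iteration step of Theorem \ref{t2} in this simpler regime. Pick $x_0 \in M$ and set $x_n = T^n x_0$. From condition (1) applied repeatedly one obtains
\begin{equation*}
d(x_{n+1},x_{n+2}) \leq d(x_n,x_{n+1})^{\xi} \leq \cdots \leq d(x_0,Tx_0)^{\xi^{n+1}}.
\end{equation*}
The multiplicative triangular inequality then gives, for $m>n$,
\begin{equation*}
d(T^m x_0, T^n x_0) \leq d(Tx_0,x_0)^{\frac{\xi^n - \xi^m}{1-\xi}},
\end{equation*}
exactly as in the proof of Theorem \ref{t2}. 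Since $\xi<1$ the right-hand side tends to $1$, so $\{x_n\}$ is a multiplicative Cauchy sequence, and completeness produces a limit $z \in M$.

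To identify $z$ as a fixed point, one can either appeal to the argument of Theorem \ref{t2} verbatim, or note that condition (1) forces $T$ to be multiplicative continuous, so $Tx_n \to Tz$; combined with $Tx_n = x_{n+1} \to z$ and uniqueness of multiplicative limits, $Tz=z$. Uniqueness is immediate: if $Tz'=z'$ with $z' \neq z$, then $d(z,z')>1$ while condition (1) yields $d(z,z') = d(Tz,Tz') \leq d(z,z')^{\xi}$, contradicting $\xi<1$.

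The only ``obstacle'' is essentially bookkeeping: verifying that the corollary's hypothesis really is a special case of Theorem \ref{t2}'s disjunctive hypothesis, which is immediate once one writes it down. No new idea beyond what is already in the proof of Theorem \ref{t2} is required.
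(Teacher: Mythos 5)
Your proposal is correct and matches the paper exactly: the paper offers no separate proof of Corollary \ref{c1}, simply noting that it follows from Theorem \ref{t2}, which is precisely your first (and sufficient) argument. The optional self-contained iteration you sketch is just the Theorem \ref{t2} proof restricted to condition (1), so nothing genuinely different is being done.
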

\begin{cor}\label{c2}Let $M$ be a complete multiplicative metric space, $\eta\in[0,\frac{1}{2})\text{ and let } T: M\to M $ be a function such that for each couple of distinct points of $M $ condition (2) of Theorem \ref{t2} is satisfied, Then $T$ has a unique fixed point.\end{cor}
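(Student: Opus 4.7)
The plan is to derive Corollary \ref{c2} as an immediate specialization of Theorem \ref{t2} rather than redoing the full argument. The hypothesis of the corollary states that condition (2) of Theorem \ref{t2} is satisfied for every pair of distinct points of $M$, with some fixed $\eta\in[0,\tfrac12)$. In particular, for every such pair, at least one of the three conditions (1), (2), (3) of Theorem \ref{t2} holds (namely (2)). So the compound disjunctive hypothesis of Theorem \ref{t2} is fulfilled for any choice of the auxiliary constants $\xi\in[0,1)$ and $\lambda\in[0,\tfrac12)$; for definiteness one can take $\xi=\lambda=0$. Theorem \ref{t2} then applies verbatim and yields the existence of a unique fixed point, which is exactly the conclusion of the corollary.

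If one preferred a self-contained argument that never invokes conditions (1) or (3), the plan would be to specialize the proof of Theorem \ref{t2}. First I would set $\delta=\eta/(1-\eta)$, observe $\delta<1$ since $\eta<\tfrac12$, and from condition (2) alone derive
\[
 d(T^{n+1}x_0,T^{n+2}x_0)\leq\bigl(d(T^n x_0,T^{n+1}x_0)\bigr)^{\delta},
\]
iterate to get geometric decay, and use the multiplicative triangle inequality to show that $\{T^n x_0\}$ is a multiplicative Cauchy sequence. Completeness of $M$ then produces a limit $z$.

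To identify $z$ as a fixed point, I would follow the same contradiction strategy as in Theorem \ref{t2}, but the argument collapses to only the case corresponding to $(2')$. Assuming $d(Tz,z)=\epsilon>1$ and choosing $p$ large enough that $T^p x_0$ and $T^{p+1}x_0$ lie in the ball $B_{\epsilon^{1/4}}(z)$, the inequality
\[
 d(T^{p+1}x_0,Tz)\leq\bigl(d(T^p x_0,T^{p+1}x_0)\cdot d(z,Tz)\bigr)^{\eta}
\]
yields an upper bound of $\epsilon^{3/4}$ (after a triangle-inequality expansion of $d(T^p x_0,T^{p+1}x_0)$), contradicting the lower bound $d(T^{p+1}x_0,Tz)>\epsilon^{3/4}$ coming from the multiplicative reverse triangle inequality. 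Uniqueness follows in exactly the same one-line manner as in the theorem: if $z,z'$ were distinct fixed points, then $d(Tz,Tz')=d(z,z')>1=d(z,Tz)\cdot d(z',Tz')$, so condition (2) fails.

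I expect the only mildly delicate point to be verifying that the constants in the corollary really do fit the framework of Theorem \ref{t2} with $\delta=\eta/(1-\eta)<1$; once that is observed, the corollary is essentially a restatement. The real work was already done in Theorem \ref{t2}, so no new obstacle appears here.
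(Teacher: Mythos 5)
Your proposal is correct and matches the paper's treatment: the paper offers no separate proof, simply remarking that Theorem \ref{t2} yields the corollary, which is exactly your first (and main) argument. Your optional self-contained specialization of the proof of Theorem \ref{t2} to condition (2) alone is also sound, but it is extra work the paper does not undertake.
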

\begin{cor}\label{c3} Let $M$ be a complete multiplicative metric space, $\lambda\in[0,\frac{1}{2})\text{ and let } T: M\to M $ be a function such that for each couple of distinct points of $M $ condition (3) of Theorem \ref{t2} is satisfied, Then $T$ has a unique fixed point. \end{cor}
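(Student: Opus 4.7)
The plan is to obtain Corollary \ref{c3} as an immediate specialization of Theorem \ref{t2}. The hypothesis here is strictly stronger than that of Theorem \ref{t2}: it demands condition (3) for every pair of distinct points of $M$, whereas Theorem \ref{t2} only requires that at least one of the three conditions hold per pair. Hence Theorem \ref{t2} applies with any admissible $\xi\in[0,1)$ and $\eta\in[0,\tfrac12)$ (for instance $\xi=\eta=0$), and the conclusion follows at once.

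For completeness, I would also record a self-contained argument that mirrors the proof of Theorem \ref{t2} restricted to condition (3). First set $\delta=\frac{\lambda}{1-\lambda}\in[0,1)$, pick $x_0\in M$ arbitrarily, and form the Picard iterates $x_n=T^n x_0$. Assuming successive terms are distinct (otherwise a fixed point is already at hand), condition (3) applied to $(x_n,x_{n+1})$ together with $d(x_{n+1},x_{n+1})=1$ gives
$$d(x_{n+1},x_{n+2})\leq d(x_n,x_{n+2})^\lambda,$$
and invoking $d(x_n,x_{n+2})\leq d(x_n,x_{n+1})\cdot d(x_{n+1},x_{n+2})$ and rearranging yields $d(x_{n+1},x_{n+2})\leq d(x_n,x_{n+1})^\delta$. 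Iterating and telescoping exactly as in Theorem \ref{t2} gives, for $m>n$,
$$d(x_m,x_n)\leq d(x_0,Tx_0)^{(\delta^n-\delta^m)/(1-\delta)}\longrightarrow 1,$$
so $\{x_n\}$ is multiplicative Cauchy and converges by completeness to some $z\in M$.

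To establish $Tz=z$ I would assume for contradiction that $d(z,Tz)=\epsilon>1$, consider the ball $B=\{x\in M:d(x,z)<\epsilon^{1/4}\}$, and pick $p$ so that $x_p,x_{p+1}\in B$. Applying condition (3) to $(x_p,z)$ and bounding $d(x_p,Tz)$ through $z$ via the multiplicative triangle inequality yields $d(x_{p+1},Tz)<\epsilon^{3/4}$, while $d(z,Tz)\leq d(z,x_{p+1})\cdot d(x_{p+1},Tz)$ with $d(z,x_{p+1})<\epsilon^{1/4}$ forces $d(x_{p+1},Tz)>\epsilon^{3/4}$, a contradiction. For uniqueness, a distinct second fixed point $z'$ would satisfy
$$d(z,z')=d(Tz,Tz')\leq\bigl(d(z,Tz')\cdot d(z',Tz)\bigr)^\lambda=d(z,z')^{2\lambda},$$
which is impossible since $2\lambda<1$ and $d(z,z')>1$. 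The only delicate step is the contradiction in the existence argument, where condition (3) must be routed through the multiplicative triangle inequality with precisely the right exponents; this is exactly the template already developed in Theorem \ref{t2}, so no new difficulty arises.
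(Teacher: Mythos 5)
Your proposal is correct and matches the paper, which derives Corollary \ref{c3} directly as a special case of Theorem \ref{t2} (requiring condition (3) for every pair is a particular instance of requiring at least one of the three conditions). The supplementary self-contained argument you sketch is also sound and is just the paper's proof of Theorem \ref{t2} specialized to condition (3), including the correct uniqueness step $d(z,z')\leq d(z,z')^{2\lambda}$.
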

\begin{rem}
  Corollaries \ref{c1}, \ref{c2} and \ref{c3} are the results of "{O}zavsar and Cevikel\cite{A}.
\end{rem}

The following result generalizes Theorem \ref{t2}.

\begin{theorem}\label{t3'}
Let $(M,d)$ be a  multiplicative metric space and let $\xi ,\eta ,\lambda $ be real numbers with $\xi\in [0,1)$ and $\eta ,\lambda\in[0,\frac{1}{2}) $ and  $T : M \rightarrow M $ a function such that for each pair of different points $x , y \in M$ at least one of the condition $(1)$, $(2)$, $(3)$ of Theorem\ref{t2} is satisfied. If for some $x_0$ the sequence $\{T^{n}(x_0)\}$ has a limit point $z_1$ in $M$, then $z_1$ is the unique fixed point of $T$.
\end{theorem}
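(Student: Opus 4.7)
The plan is to reuse the proof of Theorem \ref{t2} essentially verbatim, replacing only the single step where completeness was invoked. First I would observe that the initial block of the proof of Theorem \ref{t2}, which produces the estimate
\[
d(T^m x_0, T^n x_0)\leq (d(Tx_0, x_0))^{(\delta^n - \delta^m)/(1-\delta)}, \qquad m>n,
\]
uses nothing beyond the three contraction conditions applied along the orbit of $x_0$; completeness is never invoked there. Hence for the $x_0$ singled out in the hypothesis, the orbit $\{T^n x_0\}$ is a multiplicative Cauchy sequence.

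Next I would convert the limit-point hypothesis into honest convergence of the whole orbit. Since $z_1$ is a limit point of $\{T^n x_0\}$, every ball $B_\epsilon(z_1)$ contains a sequence term distinct from $z_1$; in particular the set $\{T^n x_0\}\setminus\{z_1\}$ must be infinite, so I can inductively select a strictly increasing sequence of indices $n_k$ with $d(T^{n_k}x_0, z_1)\to 1$. Combining this subsequential limit with the Cauchy property, given $\epsilon>1$ I pick $n_0$ so that $d(T^m x_0, T^n x_0) < \epsilon^{1/2}$ for all $m,n\geq n_0$ and then $k$ with $n_k\geq n_0$ and $d(T^{n_k}x_0, z_1)<\epsilon^{1/2}$; the multiplicative triangle inequality
\[
d(T^n x_0, z_1) \leq d(T^n x_0, T^{n_k}x_0)\cdot d(T^{n_k}x_0, z_1) < \epsilon
\]
then gives $T^n x_0 \to z_1$ in the multiplicative sense.

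Once convergence of the orbit to $z_1$ is secured, the remainder of the proof of Theorem \ref{t2}---the argument that $Tz_1 = z_1$ (via the ball $B=\{x: d(x,z_1)<\epsilon^{1/4}\}$ with $\epsilon = d(Tz_1, z_1)$, followed by the case analysis ruling out $(1')$, $(2')$ and $(3')$) together with the uniqueness calculation---transfers word for word, since neither block uses completeness. The only mildly delicate step I anticipate is the bookkeeping that promotes the topological limit-point hypothesis to a genuine convergent subsequence in the multiplicative setting; but this reduces to the one-line observation that the sequence set cannot be finite, after which a standard diagonal choice of indices does the job.
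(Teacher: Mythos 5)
Your proposal is correct and follows essentially the same route as the paper: reuse the Cauchy estimate from Theorem \ref{t2} (which never invokes completeness), upgrade the limit point $z_1$ to the limit of the whole orbit, and then transfer the fixed-point and uniqueness arguments verbatim. The only difference is that you spell out the Cauchy-plus-limit-point-implies-convergence step, which the paper's proof leaves implicit.
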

\begin{proof}
To prove the required result, following the lines in the the proof of Theorem \ref{t2} we conclude that $\{T^{n}(x_0)\}$ is  multiplicative  Cauchy sequence. Since $T^{n}(x_0)\rightarrow z_1$. Again following the proof of  Theorem \ref{t2} we get that $z_1$  is fixed point of $T.$
\end{proof}

Like  Theorem \ref{t2} one can deduce corollaries from  Theorem \ref{t3'}.

\begin{theorem}\label{t22}
Let $S$ be a set in complete multiplicative metric space $(M,d)$, $\xi ,\eta ,\lambda $ be real numbers with $\xi\in [0,1)$ and $\eta ,\lambda\in[0,\frac{1}{2}) $ and  $T : S \rightarrow S $ a function such that for each pair of different points in $S$, at least one of the three conditions of Theorem \ref{t2} is satisfied. Then, for $x_0 \in S$, the sequence $\{T^n(x_0)\}^\infty_{n=0}$ converges to a point in $M$, independent on the choice of $x_0$.
\end{theorem}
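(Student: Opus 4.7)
The plan is to reuse the Cauchy-sequence half of the proof of Theorem \ref{t2} verbatim and then add a new argument to show that the limit is independent of the starting point. Since $T(S)\subseteq S$, every iterate $T^nx_0$ lies in $S$, so the three-case hypothesis applies to each consecutive pair $(T^nx_0,T^{n+1}x_0)$. Repeating the case analysis with $\delta:=\max\{\xi,\eta/(1-\eta),\lambda/(1-\lambda)\}<1$ gives $d(T^nx_0,T^{n+1}x_0)\leq d(x_0,Tx_0)^{\delta^n}$ and hence, via the multiplicative triangular inequality, the telescoping bound $d(T^mx_0,T^nx_0)\leq d(x_0,Tx_0)^{(\delta^n-\delta^m)/(1-\delta)}\to 1$. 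So $\{T^nx_0\}$ is multiplicative Cauchy in the ambient space $(M,d)$, and completeness of $M$ produces a limit $z_{x_0}\in M$.

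To prove independence of this limit, I would pick $x_0,y_0\in S$ and compare the two orbits. Assume the orbits never coincide (otherwise the limits agree trivially), and set $a_n:=d(T^nx_0,T^ny_0)$, $b_n:=d(T^nx_0,T^{n+1}x_0)$, $c_n:=d(T^ny_0,T^{n+1}y_0)$; the first step already yields $b_n,c_n\to 1$. Applying the three-case hypothesis to the distinct pair $T^nx_0,T^ny_0$ gives at each $n$ one of the bounds $a_{n+1}\leq a_n^{\xi}$, or $a_{n+1}\leq(b_nc_n)^{\eta}$, or $a_{n+1}\leq\bigl(d(T^nx_0,T^{n+1}y_0)\cdot d(T^ny_0,T^{n+1}x_0)\bigr)^{\lambda}$. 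In the last case the multiplicative triangular inequality bounds each factor by $a_nc_n$ and $a_nb_n$ respectively, yielding $a_{n+1}\leq a_n^{2\lambda}(b_nc_n)^{\lambda}$. Setting $\mu:=\max\{\xi,2\lambda\}<1$, every case can be absorbed into a recursion of the form $a_{n+1}\leq a_n^{\mu}\cdot K_n$ with $K_n\to 1$ geometrically (in Case 2 the $a_n^{\mu}$ factor is superfluous but harmless since $a_n\geq 1$). Iterating then forces $a_n\to 1$, and one last use of the multiplicative triangular inequality gives $d(z_{x_0},z_{y_0})\leq d(z_{x_0},T^nx_0)\cdot a_n\cdot d(T^ny_0,z_{y_0})\to 1$, so $z_{x_0}=z_{y_0}$.

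The main obstacle is the cross-term case (condition (3)): unlike conditions (1) and (2), it does not directly produce a contraction of $a_n$, because its arguments mix iterates from the two different orbits. Routing the cross terms through the multiplicative triangular inequality to recover the $a_n^{2\lambda}$ factor is precisely what makes use of the hypothesis $\lambda<1/2$, and some care is needed to unify the three possible branches at each step into a single recursion so that $a_n\to 1$ drops out cleanly. The Cauchy-sequence estimate from Theorem \ref{t2} is then doing exactly the job one needs for the auxiliary sequences $b_n,c_n$.
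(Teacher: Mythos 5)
Your proof is correct, and the second half takes a genuinely different route from the paper. Both proofs share the first half (the $\delta$-recursion showing $\{T^nx_0\}$ is multiplicative Cauchy, hence convergent in $M$), and both ultimately apply the trichotomy to the pair $(T^nx_0,T^ny_0)$ and tame condition (3)'s cross terms with the multiplicative triangle inequality. But where you set up the global recursion $a_{n+1}\leq a_n^{\mu}K_n$ with $\mu=\max\{\xi,2\lambda\}<1$ and $K_n\to 1$, and let $a_n\to 1$ fall out of the iteration, the paper instead argues by contradiction at a single large index: assuming $z\neq\acute{z}$, it fixes $\epsilon$ with $1<\epsilon<d(z,\acute{z})^{\frac{1}{2}\min\{\frac{1-\xi}{1+\xi},\frac{1}{2},\frac{1-2\lambda}{1+2\lambda}\}}$, waits until both orbits are $\epsilon$-close to their limits, and shows that each of the three conditions would then force $d(T^{n+1}x_0,T^{n+1}y_0)$ to be simultaneously below and above $d(z,\acute{z})/\epsilon^2$ (via the reverse triangle inequality), so none can hold --- contradicting the hypothesis. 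Your version buys a cleaner, quantitative statement ($a_n\to 1$ directly, no pre-selected $\epsilon$) at the cost of needing the geometric decay of $b_n,c_n$ from the first half and the standard lemma that $u_{n+1}\leq\mu u_n+v_n$ with $v_n\to 0$ forces $u_n\to 0$; the paper's version only uses convergence of the two orbits, not the rate. One small point to make explicit: if $T^nx_0=T^ny_0$ for some $n$ the tails coincide and the limits trivially agree, so the trichotomy is indeed available at every $n$ in the remaining case --- your phrase ``the orbits never coincide'' covers this but deserves a sentence.
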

\begin{proof}
Like in the proof of Theorem \ref{t2}, it can be shown that there exists a point $z\in M$ such that
\begin{equation*}\lim_{n\to\infty} T^n(x_0)=z.\end{equation*}

Let $y_0$ be an arbitrary point in $S$ such that $T^n(y_0)\to \acute{z},$ where $\acute{z}\in M$. Now we have to prove that $z=\acute{z}.$
Suppose on the contrary that  $z\neq\acute{z}$, then $d(z,\acute{z})>1,$ and choose the real number $\epsilon>1$ such that
\begin{equation*}
1<\epsilon< d(z,\acute{z})^{\frac{1}{2}\cdot min\{\frac{1-\xi}{1+\xi},\frac{1}{2},\frac{1-2\lambda}{1+2\lambda}\}}.
 \end{equation*}
As $\{T^n(x_0)\}$ and $\{T^n(y_0)\}$ are convergent, so there will be a natural number $N$ such that $d(T^n(x_0),z)\leq \epsilon$ and $d(T^n(y_0),\acute{z})\leq \epsilon$ for every $n\geq N.$\\
Let $x=T^n(x_0)$ and $y=T^n(y_0)$ for $n\geq N$.  Clearly $x\neq y$. Suppose for these $x$ and $y$ condition (1)of Theorem \ref{t2} is satisfied. Then,
\begin{equation*}
d(T^{n+1}(x_0),T^{n+1}(y_0))\leq d(T^n(x_0),T^n(y_0))^\xi.
\end{equation*}
But
\begin{eqnarray*}
d(T^n(x_0),T^n(y_0))^\xi&\leq& \big(d(T^n(x_0),z)\cdot d(z,\acute{z})\cdot d(\acute{z},T^n(y_0))\big)^\xi
\leq d(z,\acute{z})^\xi\cdot\epsilon^{2\xi}.
\end{eqnarray*}
Since
\begin{equation*}\epsilon< d(z,\acute{z})^{\frac{1}{2}\cdot{\frac{1-\xi}{1+\xi}}}
\end{equation*}
which implies that
\begin{equation*} d(z,\acute{z})^\xi\cdot\epsilon^{2\xi}< \frac{d(z,\acute{z})}{\epsilon^2}.
\end{equation*}
Therefore
\begin{eqnarray*}
d(T^n(x_0),T^n(y_0))^\xi< \frac{d(z,\acute{z})}{\epsilon^2}
\leq\frac{d(z,\acute{z})}{d(T^{n+1}(x_0),z)\cdot d(\acute{z},T^{n+1}(y_0))}\leq d(T^{n+1}(x_0),T^{n+1}(y_0)),
\end{eqnarray*}
which gives a contradiction.

Again if for the above  $x$ and $y$ condition (2)of  Theorem \ref{t2} is satisfied. Then
\begin{equation*}
d(T^{n+1}(x_0),T^{n+1}(y_0))\leq (d(T^n(x_0),T^{n+1}(x_0))\cdot d(T^n(y_0),T^{n+1}(y_0)))^\eta .
\end{equation*}
Now,
\begin{multline*}
(d(T^n(x_0),T^{n+1}(x_0))\cdot d(T^n(y_0),T^{n+1}(y_0)))^\eta \\
\leq (d(T^n(x_0),z)\cdot d(z,T^{n+1}(x_0))\cdot d(T^n(y_0),\acute{z})\cdot d(\acute{z},T^{n+1}(y_0)))^\eta\leq \epsilon^{4\eta}
\end{multline*}
Since
\begin{equation*}\epsilon< d(z,\acute{z})^{\frac{1}{2}\cdot \frac{1}{2}}< d(z,\acute{z})^{\frac{1}{2}\cdot{\frac{1}{1+2\eta}}}.
\end{equation*}
Hence
\begin{equation*}
\epsilon^{4\eta}<\frac{d(z,\acute{z})}{\epsilon^2}.
\end{equation*}
Therefore
\begin{eqnarray*}
(d(T^n(x_0),T^{n+1}(x_0))\cdot d(T^n(y_0),T^{n+1}(y_0)))^\eta\leq \epsilon^{4\eta}<\frac{d(z,\acute{z})}{\epsilon^2}\leq d(T^{n+1}(x_0),T^{n+1}(y_0)),
\end{eqnarray*}
which is a contradiction.

Finally suppose for the same point  $x=T^{n}(x_0)$ and $y=T^{n}(y_0)$ condition (3) of Theorem \ref{t2} is satisfied. Then
\begin{equation*}
d(T^{n+1}(x_0),T^{n+1}(y_0))\leq (d(T^n(x_0),T^{n+1}(y_0))\cdot d(T^n(y_0),T^{n+1}(x_0)))^\lambda.
\end{equation*}
On the other hand,
\begin{multline*}
(d(T^n(x_0),T^{n+1}(y_0))\cdot d(T^n(y_0),T^{n+1}(x_0)))^\lambda\\
\leq (d(T^n(x_0),z)\cdot d(z,\acute{z})\cdot d(\acute{z},T^{n+1}(y_0))\cdot d(T^n(y_0),z)\cdot d(z,\acute{z})\cdot d(\acute{z},T^{n+1}(x_0)))^\lambda\\
\leq  d(z,\acute{z})^{2\lambda}\cdot\epsilon^{4\lambda}
\end{multline*}
Since
\begin{equation*}\epsilon< d(z,\acute{z})^{\frac{1}{2}\cdot\frac{1-2\lambda}{1+2\lambda}}\Rightarrow d(z,\acute{z})^{2\lambda}\cdot\epsilon^{4\lambda}<\frac{d(z,\acute{z})}{\epsilon^2},
\end{equation*}
Thus
\begin{eqnarray*}
(d(T^n(x_0),T^{n+1}(y_0))\cdot d(T^n(y_0),T^{n+1}(x_0)))^\lambda<\frac{d(z,\acute{z})}{\epsilon^2}\leq d(T^{n+1}(x_0),T^{n+1}(y_0)).
\end{eqnarray*}
Which is again a contradiction. Hence $z=\acute{z}$, which completes the proof.
\end{proof}

The following result extent Theorem\ref{t2}.

\begin{theorem}\label{t23}
Let $(M,d)$ be a  multiplicative metric space and $T : M \rightarrow M $ a continuous function such that for each pair of distinct points $x , y \in M$ at least one of the following conditions is satisfied:

$ (I)\  \ d(Tx,Ty)< (d(x,y))$

$(II)\  \ d(Tx,Ty)< \big(d(x,Tx).d(y,Ty)\big)^\frac{1}{2}$

$(III)\  \ d(Tx,Ty)< \big (d(x,Ty).d(y,Tx)\big)^\frac{1}{2}$\\
If for some $x_0\in M$, the sequence  $\{T^n(x_0)\}^\infty_{n=0}$ has a limit point $z$ in $M$, then $z$  is a unique fixed point $T$.
\end{theorem}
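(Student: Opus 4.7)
The plan is to adapt the Picard-iteration argument used in Theorem \ref{t2}, but since the exponents are now pushed to their boundary values ($\xi=1$, $\eta=\lambda=1/2$, with strict inequality on the inequalities themselves) the geometric-series estimate is no longer available. Instead, I will exploit strict monotonicity of the consecutive-iterate distance together with continuity of $T$ and the presence of a limit point to force $Tz=z$.

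First, set $a_n=d(T^n x_0, T^{n+1} x_0)$. If $T^n x_0 = T^{n+1} x_0$ ever occurs, then $T^n x_0$ is a fixed point, the sequence is eventually constant at it, and that value is the unique limit point $z$; I may therefore assume all consecutive iterates are distinct. Applying the three-way hypothesis to the distinct pair $(T^n x_0, T^{n+1} x_0)$ and simplifying each of (I), (II), (III) exactly as in the first part of Theorem \ref{t2}, but keeping the strict inequality, yields $a_{n+1} < a_n$ in every case. So $(a_n)$ is strictly decreasing and bounded below by $1$; set $a=\lim_n a_n \geq 1$.

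Next, let $T^{n_k}(x_0) \to z$ along the subsequence given by the limit-point hypothesis. Continuity of $T$ yields $T^{n_k+1}(x_0) \to Tz$ and $T^{n_k+2}(x_0) \to T^2 z$. Continuity of the multiplicative metric, an immediate consequence of the multiplicative reverse triangular inequality, then gives $a_{n_k} \to d(z, Tz)$ and $a_{n_k+1} \to d(Tz, T^2 z)$. Both are limits of subsequences of the convergent sequence $(a_n)$, so
\[
d(z, Tz) \;=\; a \;=\; d(Tz, T^2 z).
\]
The crucial step is then to rule out $Tz \neq z$. If that failed, $(z, Tz)$ would be a distinct pair, so one of (I)--(III) would apply: case (I) gives $d(Tz, T^2 z) < d(z, Tz)$ directly; case (II) gives $d(Tz, T^2 z) < (d(z, Tz)\cdot d(Tz, T^2 z))^{1/2}$, which after squaring yields the same; and case (III) combines $d(Tz, T^2 z) < d(z, T^2 z)^{1/2}$ with the multiplicative triangle inequality $d(z, T^2 z) \leq d(z, Tz)\cdot d(Tz, T^2 z)$ to the same effect. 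Each case contradicts the equality above, so $Tz = z$.

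Uniqueness is straightforward: if $z' \neq z$ were a second fixed point, substituting $Tz=z$ and $Tz'=z'$ into (I)--(III) would collapse every right-hand side either to $d(z,z')$ (yielding the impossible $d(z,z') < d(z,z')$) or, in case (II), to $1$, incompatible with $d(z,z') > 1$. The one real obstacle is the passage from ``$(a_{n_k})$ converges to $d(z, Tz)$'' to an actual fixed-point equation: it requires both continuity of $T$ (to shift the index through the limit) and the monotonicity of the full sequence $(a_n)$ to identify the two subsequential limits, neither ingredient being needed in Theorem \ref{t2}, where the strictly smaller exponent did the work directly.
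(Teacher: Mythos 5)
Your proposal is correct and follows essentially the same route as the paper's proof: show that $a_n=d(T^nx_0,T^{n+1}x_0)$ is strictly decreasing (hence convergent), use continuity of $T$ and of the metric to identify both $d(z,Tz)$ and $d(Tz,T^2z)$ with the common limit $a$, and then contradict this equality by applying (I)--(III) to the pair $(z,Tz)$. Your explicit three-case argument ruling out $Tz\neq z$ is in fact a useful clarification of the paper's rather terse ``due to the uniqueness of limit point, it follows that $T(z)=z$.''
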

\begin{proof}
Consider subsequence $\{T^{n_i}(x_0)\}^\infty_{i=0}$ of $\{T^n(x_0)\}^\infty_{n=0}$ such that
\begin{equation*}
\lim_{i\to\infty}T^{n_i}(x_0)=z.
\end{equation*}
Then due to the continuity of $T$ we have
\begin{equation*}
\lim_{i\to\infty}T^{n_{i+1}}(x_0)=Tz \  \ \hbox{and}   \ \lim_{i\to\infty}T^{n_{i+2}}(x_0)=T^2z .
\end{equation*}
Next we show that for each nonnegative integer $n$ either $T^n(x_0)=T^{n+1}(x_0)$

or $$d(T^{n+1}(x_0),T^{n+2}(x_0))<d(T^n(x_0),T^{n+1}(x_0)).$$
Suppose, that there exist  some nonnegative integer $m$, such that
$T^m(x_0)\neq T^{m+1}(x_0)$
\hbox{and}
\begin{equation}\label{p}
  d(T^m(x_0),T^{m+1}(x_0))\leq d(T^{m+1}(x_0),T^{m+2}(x_0)).
 \end{equation}
Clearly  for distinct points $x=T^m(x_0)$ and $T^{m+1}(x_0)=y$,    inequality(\ref{p}) violates  each of the following three expression   representing conditions $(I), (II) and (III)$ of Theorem \ref{t23} respectively.
$$ \hspace{-5cm}d(T^{m+1}(x_0),T^{m+2}(x_0))<d(T^m(x_0),T^{m+1}(x_0)).$$
 $$d(T^{m+1}(x_0),T^{m+2}(x_0))<\big(d(T^m(x_0),T^{m+1}(x_0))\cdot d(T^{m+1}(x_0),T^{m+2}(x_0))\big)^\frac{1}{2}$$
\begin{eqnarray*}
d(T^{m+1}(x_0),T^{m+2}(x_0))&<&\big(d(T^m(x_0),T^{m+2}(x_0))\cdot 1\big)^\frac{1}{2}
\\&\leq& \big(d(T^m(x_0),T^{m+1}(x_0))\cdot d(T^{m+1}(x_0),T^{m+2}(x_0))\big)^\frac{1}{2}
\end{eqnarray*}

Hence for $T^m(x_0)\neq T^{m+1}(x_0)$, we have
\begin{equation*}\label{1}
  d(T^{m+1}(x_0),T^{m+2}(x_0)) <  d(T^m(x_0),T^{m+1}(x_0)).
 \end{equation*}

Therefore $\{ d(T^n(x_0),T^{n+1}(x_0)\}^\infty_{n=0}$ is monotonically decreasing and hence  convergent.\\Moreover
\begin{eqnarray*}
d(z,T(z))&=&\lim_{i\to\infty}d(T^{n_i}(x_0),T^{n_{i+1}}(x_0))=\lim_{i\to\infty}d(T^n(x_0),T^{n+1}(x_0))
\\&=&\lim_{i\to\infty}d(T^{n_{i+1}}(x_0),T^{n_{i+2}}(x_0))=d(T(z),T^2(z)).
\end{eqnarray*}
Now due to the uniqueness of limit point, it follows that $T(z)=z$.

For the uniqueness of fixed point, suppose $\acute{z}\neq z$ is another fixed point of $T$. Then it can be easily shown, as in the proof of Theorem \ref{t2}, that none of the three conditions of the theorem is satisfied for these distinct points. It completes the proof.
\end{proof}

One   can deduce the following  corollaries from  Theorem \ref{t23}
\begin{cor}\label{c1'} Let $M$ be a  multiplicative metric space, and let $ T: M\to M $ be a continuous  function such that for each pair of distinct points of $M $ condition $(I)$ of Theorem \ref{t23} is satisfied, and  if for some $x_0\in M$, the sequence  $\{T^n(x_0)\}^\infty_{n=0}$ has a limit point $z$ in $M$. Then $T$ has a unique fixed point.
\end{cor}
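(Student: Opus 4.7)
The plan is to deduce Corollary \ref{c1'} as an immediate specialization of Theorem \ref{t23}. First I would observe that the hypothesis of the corollary is logically stronger (in the disjunctive sense required by Theorem \ref{t23}) than what is needed: if for each pair of distinct points $x,y\in M$ the strict inequality $d(Tx,Ty)<d(x,y)$ of condition $(I)$ holds, then trivially at least one of the three alternatives $(I)$, $(II)$, $(III)$ holds for every such pair, namely $(I)$ itself. Thus the contractive disjunction in Theorem \ref{t23} is satisfied.

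Next I would check that the remaining standing assumptions of Theorem \ref{t23} transfer verbatim from the corollary: $(M,d)$ is a multiplicative metric space, $T:M\to M$ is continuous, and there is some $x_0\in M$ such that the orbit $\{T^n(x_0)\}_{n=0}^\infty$ has a limit point $z\in M$. Applying Theorem \ref{t23} then yields that $z$ is a fixed point of $T$, and the same theorem simultaneously delivers uniqueness, which completes the argument.

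There is essentially no obstacle here, because the corollary is obtained merely by restricting the disjunctive hypothesis of Theorem \ref{t23} to a single conjunct. Should one prefer a self-contained proof, it would suffice to transcribe only the portions of the proof of Theorem \ref{t23} that invoke condition $(I)$, namely the strict monotone decrease of $\{d(T^n(x_0),T^{n+1}(x_0))\}$ under $(I)$, the identification $d(z,Tz)=d(Tz,T^2z)$ via continuity along a convergent subsequence $T^{n_i}(x_0)\to z$, and the uniqueness step from the proof of Theorem \ref{t2}. However, invoking Theorem \ref{t23} directly is cleaner and more economical.
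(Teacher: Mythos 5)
Your proposal is correct and matches the paper's intent exactly: the paper presents Corollary \ref{c1'} as an immediate specialization of Theorem \ref{t23}, obtained by noting that satisfying condition $(I)$ for every pair of distinct points trivially satisfies the disjunction of $(I)$, $(II)$, $(III)$, while the continuity and limit-point hypotheses carry over verbatim. No further comment is needed.
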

\begin{cor}\label{c2'} Let $M$ be a  multiplicative metric space, and let  $ T: M\to M $ be a continuous function such that for each couple of distinct points of $M $ condition $(II)$ of Theorem \ref{t23} is satisfiedand  if for some $x_0\in M$, the sequence  $\{T^n(x_0)\}^\infty_{n=0}$ has a limit point $z$ in $M$. Then $T$ has a unique fixed point.
\end{cor}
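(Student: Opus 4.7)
The plan is to view Corollary \ref{c2'} as an immediate specialization of Theorem \ref{t23}: the hypothesis that condition $(II)$ holds for every pair of distinct points of $M$ trivially implies that ``at least one of $(I), (II), (III)$ is satisfied'' for every such pair, so every hypothesis of Theorem \ref{t23} is in force. Invoking that theorem directly yields the unique fixed point, namely the limit point $z$ of the sequence $\{T^n(x_0)\}_{n=0}^\infty$.

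If a self-contained argument is preferred, one can trace through the proof of Theorem \ref{t23} and verify that only condition $(II)$ is needed. Whenever $T^m(x_0)\neq T^{m+1}(x_0)$, applying $(II)$ with $x=T^m(x_0)$ and $y=T^{m+1}(x_0)$ yields
\begin{equation*}
d(T^{m+1}(x_0),T^{m+2}(x_0)) < \big(d(T^m(x_0),T^{m+1}(x_0))\cdot d(T^{m+1}(x_0),T^{m+2}(x_0))\big)^{1/2},
\end{equation*}
and squaring followed by cancellation of the common factor produces the strict decrease $d(T^{m+1}(x_0),T^{m+2}(x_0))<d(T^m(x_0),T^{m+1}(x_0))$. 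The remaining chain of equalities coming from continuity of $T$ along the convergent subsequence, namely $d(z,Tz)=d(Tz,T^2z)$, is unaffected by the restriction to condition $(II)$, so combining it with the strict decrease just established forces $Tz=z$ by uniqueness of the limit.

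Uniqueness is the cleanest step. If $z'\neq z$ were another fixed point, condition $(II)$ applied to $z$ and $z'$ would give
\begin{equation*}
d(z,z')=d(Tz,Tz') < \big(d(z,Tz)\cdot d(z',Tz')\big)^{1/2}=1,
\end{equation*}
contradicting $d(z,z')>1$. I do not foresee any real obstacle: the corollary is genuinely a specialization of Theorem \ref{t23}, and the only point that needs inspection is that neither the monotonicity argument nor the uniqueness argument relied in a hidden way on conditions $(I)$ or $(III)$, which the paragraphs above confirm.
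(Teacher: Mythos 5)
Your proposal is correct and matches the paper exactly: the paper offers no separate argument for Corollary \ref{c2'}, stating only that it is deduced from Theorem \ref{t23}, which is precisely your first paragraph. Your additional tracing of the monotonicity and uniqueness steps under condition $(II)$ alone is a sound (and welcome) verification, but it is not something the paper itself carries out.
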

\begin{cor}\label{c3'} Let $M$ be a  multiplicative metric space, and let  $ T: M\to M $ be a continuous function such that for each couple of distinct points of $M $ condition $(III)$ of Theorem \ref{t23} is satisfiedand  if for some $x_0\in M$, the sequence  $\{T^n(x_0)\}^\infty_{n=0}$ has a limit point $z$ in $M$. Then $T$ has a unique fixed point.
 \end{cor}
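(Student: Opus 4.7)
The plan is to derive this corollary as an immediate specialization of Theorem \ref{t23}. The hypothesis ``condition $(III)$ is satisfied for every pair of distinct points of $M$'' logically implies ``at least one of $(I), (II), (III)$ is satisfied for every pair of distinct points of $M$,'' so the trichotomous hypothesis of Theorem \ref{t23} holds automatically. Combined with the continuity of $T$ and the existence of a limit point of $\{T^{n}(x_{0})\}_{n=0}^{\infty}$ in $M$, all hypotheses of Theorem \ref{t23} are in force.

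Concretely, I would first invoke Theorem \ref{t23} to obtain that the limit point $z$ is a fixed point of $T$; this uses the monotonicity argument for $\{d(T^{n}(x_{0}),T^{n+1}(x_{0}))\}$ established there, which in our setting rests only on condition $(III)$, together with the continuity of $T$ applied along a subsequence $T^{n_{i}}(x_{0}) \to z$ to force $T(z) = z$ by uniqueness of limits.

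For uniqueness, I would again quote Theorem \ref{t23}: if $z' \neq z$ is another fixed point, then $d(z,z') > 1$ and $d(Tz,Tz') = d(z,z')$, whereas condition $(III)$ would force
\begin{equation*}
d(z,z') = d(Tz,Tz') < \bigl(d(z,Tz')\cdot d(z',Tz)\bigr)^{\frac{1}{2}} = \bigl(d(z,z')\cdot d(z',z)\bigr)^{\frac{1}{2}} = d(z,z'),
\end{equation*}
a contradiction. Hence $z$ is the unique fixed point.

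There is no genuine obstacle here; the content is essentially a direct instantiation of the ambient theorem. The only minor thing to be careful about is stating the logical implication clearly, namely that restricting to a single one of the three alternatives is a \emph{stronger} hypothesis than the disjunction, so nothing needs to be reproved from scratch.
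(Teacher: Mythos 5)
Your proposal is correct and matches the paper's intent exactly: the paper offers no separate proof for this corollary, merely noting that it is deduced from Theorem \ref{t23}, and your observation that restricting to condition $(III)$ alone is a strengthening of the disjunctive hypothesis is precisely the right (and only needed) point. The explicit uniqueness computation you include is a sound instantiation of the argument already contained in Theorem \ref{t23}.
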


 The next theorem is an improvement of Theorem \ref{t23}. The proof is simple therefore we omit it.

\begin{theorem}\label{t24}
 Let $M$ be a multiplicative  metric space and  $T:M\to M$ a function such that for each couple of different points in  $M$, at least one  of the conditions  $ (I), \  (II), \    (III)$ of Theorem \ref{t23} is satisfied.  If for some  $x_0\in M$, the  sequence  $\{T^nx_0 \}_{n=0}^\infty $ has a limit point $z\in M$ and $T$ is  continuous  at  $z$  and  at  $T(z)$,  then  $z$  is  a  unique fixed  point  of $T$.
\end{theorem}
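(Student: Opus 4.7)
The plan is to adapt the proof of Theorem \ref{t23} verbatim, noting that global continuity enters that argument in exactly one place: when one passes from the subsequential convergence $T^{n_i}(x_0) \to z$ to $T^{n_i+1}(x_0) \to T(z)$ and then to $T^{n_i+2}(x_0) \to T^2(z)$. Both identifications go through under the weaker hypothesis of the present theorem --- continuity of $T$ at $z$, applied to the sequence $\{T^{n_i}(x_0)\}$, yields $T^{n_i+1}(x_0) = T(T^{n_i}(x_0)) \to T(z)$, and continuity at $T(z)$, applied to the sequence $\{T^{n_i+1}(x_0)\}$, then yields $T^{n_i+2}(x_0) = T(T^{n_i+1}(x_0)) \to T^2(z)$.

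The next step is the monotonicity claim: for every nonnegative integer $n$, either $T^n(x_0) = T^{n+1}(x_0)$ (which immediately terminates the argument with a fixed point) or $d(T^{n+1}(x_0), T^{n+2}(x_0)) < d(T^n(x_0), T^{n+1}(x_0))$. This is purely algebraic: one applies each of the strict contraction conditions $(I), (II), (III)$ to the distinct pair $(T^n(x_0), T^{n+1}(x_0))$ and derives a contradiction with any reverse inequality, exactly as in the proof of Theorem \ref{t23}. No continuity is invoked, so this part transfers directly and shows that the scalar sequence $\{d(T^n(x_0), T^{n+1}(x_0))\}$ is monotone decreasing, hence converges to some $L \geq 1$.

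To conclude that $T(z) = z$, I would combine the two previous steps. Using continuity of the multiplicative metric in each variable (an immediate consequence of the reverse triangular inequality recalled in the preliminaries) along the subsequence indexed by $n_i$,
\begin{equation*}
d(z, T(z)) \;=\; \lim_{i\to\infty} d(T^{n_i}(x_0), T^{n_i+1}(x_0)) \;=\; L \;=\; \lim_{i\to\infty} d(T^{n_i+1}(x_0), T^{n_i+2}(x_0)) \;=\; d(T(z), T^2(z)).
\end{equation*}
If $T(z) \neq z$, then $(z, T(z))$ is a pair of distinct points, and inspection of each of $(I), (II), (III)$ shows that all three force the strict inequality $d(T(z), T^2(z)) < d(z, T(z))$, contradicting the displayed equality. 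Hence $T(z) = z$. Uniqueness is then obtained exactly as in the proof of Theorem \ref{t2}: a hypothetical second fixed point $z' \neq z$ would have to satisfy at least one of $(I), (II), (III)$, each of which collapses to a strict inequality with $d(z, z')$ on the right-hand side (or to $d(z,z')<1$), contradicting $d(z,z')>1$.

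The main ``obstacle'' is really a bookkeeping observation rather than a genuine difficulty: one must confirm that pointwise continuity at precisely the two points $z$ and $T(z)$ suffices to push the limit through the composition $T(T^{n_i}(x_0))$ and then $T(T^{n_i+1}(x_0))$, and that no other step in the proof of Theorem \ref{t23} ever uses continuity anywhere else. Once this is checked, the argument is essentially identical to that of Theorem \ref{t23}, which is presumably why the authors describe the proof as simple and omit it.
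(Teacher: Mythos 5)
Your proof is correct and follows exactly the route the authors intend: the paper actually omits the proof of Theorem \ref{t24}, remarking only that it is simple, and the intended argument is precisely your adaptation of the proof of Theorem \ref{t23}, in which global continuity is used only to pass from $T^{n_i}(x_0)\to z$ to $T^{n_i+1}(x_0)\to T(z)$ and $T^{n_i+2}(x_0)\to T^2(z)$ --- exactly what continuity at $z$ and at $T(z)$ already supplies --- while the monotonicity of $\{d(T^n(x_0),T^{n+1}(x_0))\}$ and the uniqueness step (as in Theorem \ref{t2}) use no continuity at all. If anything, your justification that $T(z)=z$, by deriving the strict inequality $d(T(z),T^2(z))<d(z,T(z))$ from each of $(I)$--$(III)$ and contradicting the equality of the two subsequential limits, is more explicit than the corresponding step in the paper's proof of Theorem \ref{t23}, which appeals only to ``uniqueness of the limit point.''
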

The  condition of continuity for mapping is relaxed in the following theorem
\begin{theorem}\label{t25}
 Let $M$ be a multiplicative metric space and  $T:M\to M$ a function such that for each couple of different points in  $M$, at least one  of the conditions  $ (I),\    (II), \     (III)$ of Theorem \ref{t23} is satisfied.  If for some  $x_0\in M$, the  sequence  $\{T^nx_0 \}_{n=0}^\infty $ converge to a  point $z\in M$  then  $z$  is  a  unique fixed  point  of $T$.
\end{theorem}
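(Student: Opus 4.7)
The plan is to adapt the small-ball contradiction argument from the second half of the proof of Theorem \ref{t2}, using the given convergence $T^n x_0\to z$ in place of completeness (which also removes the need for the continuity hypothesis used in Theorem \ref{t23}). Note that the strict inequalities built into $(I),(II),(III)$ play exactly the role that $\xi<1$ and $\eta,\lambda<1/2$ played in Theorem \ref{t2}, so no exponent juggling is needed: the arithmetic carries over with the exponents set to $1$ and $\tfrac{1}{2}$ respectively.

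For existence I would argue by contradiction. Assume $Tz\neq z$ and set $\epsilon=d(z,Tz)>1$. Consider the multiplicative open ball $B=\{x\in M:d(x,z)<\epsilon^{1/4}\}$. Since $T^n x_0\to z$, there is a natural number $p$ with $T^n x_0\in B$ for all $n\geq p$. Applying the multiplicative triangle inequality
\[
\epsilon=d(z,Tz)\leq d(z,T^{p+1}x_0)\cdot d(T^{p+1}x_0,Tz)\leq \epsilon^{1/4}\cdot d(T^{p+1}x_0,Tz)
\]
gives the lower bound $d(T^{p+1}x_0,Tz)\geq \epsilon^{3/4}$. Now apply the hypothesis to the distinct pair $x=T^p x_0,\,y=z$: one of $(I),(II),(III)$ must hold. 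Bounding $d(T^p x_0,T^{p+1}x_0)$ and $d(T^p x_0,Tz)$ via the multiplicative triangle inequality routed through $z$, exactly as in Theorem \ref{t2}, each of the three cases produces a strict upper bound $d(T^{p+1}x_0,Tz)<(\epsilon^{1/4}\cdot\epsilon^{1/4})^{1/2}\cdot\epsilon^{1/2}=\epsilon^{3/4}$, $d(T^{p+1}x_0,Tz)<(\epsilon^{1/4}\cdot\epsilon\cdot\epsilon^{1/4})^{1/2}=\epsilon^{3/4}$, and similarly $\epsilon^{3/4}$ respectively. Each contradicts the lower bound, forcing $Tz=z$.

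For uniqueness, suppose $z'\neq z$ were a second fixed point, so $d(Tz,Tz')=d(z,z')>1$. Then $(I)$ would give $d(z,z')<d(z,z')$; $(II)$ would give $d(z,z')<(d(z,Tz)\cdot d(z',Tz'))^{1/2}=(1\cdot 1)^{1/2}=1$; and $(III)$ would give $d(z,z')<(d(z,Tz')\cdot d(z',Tz))^{1/2}=d(z,z')$. Each is impossible, so none of the three conditions can hold for the pair $(z,z')$, contradicting the hypothesis. I do not expect any genuine obstacle: the only point that requires care is verifying the three numeric bounds all collapse to exactly $\epsilon^{3/4}$, which is precisely the threshold the triangle inequality produces.
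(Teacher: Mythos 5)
Your proposal is correct and follows exactly the route the paper intends: its own proof of Theorem \ref{t25} is just a pointer back to the argument of Theorem \ref{t2} with the observation that the constants $\xi,\eta,\lambda$ are inessential once the inequalities in $(I)$--$(III)$ are strict, and your ball-of-radius-$\epsilon^{1/4}$ contradiction for existence plus the three-case check for uniqueness is precisely that argument written out. The only blemish is cosmetic: the first of your three ``respective'' upper bounds, $(\epsilon^{1/4}\cdot\epsilon^{1/4})^{1/2}\cdot\epsilon^{1/2}$, is the case $(II)$ estimate rather than the case $(I)$ one (which is simply $d(T^{p+1}x_0,Tz)<d(T^px_0,z)<\epsilon^{1/4}$), but all three cases do land strictly below $\epsilon^{3/4}$ as you claim.
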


\begin{proof}
The uniqueness part can easily be derived  similarly to Theoren\ref{c1}. To show the existence of fixed point, again following the lines in the proof Theorem \ref{c1}, because for the conditions 1),2) and 3) of Theorem \ref{t2} to be true, the use of the constants $\xi ,\eta$ and $\lambda$ is not essential. One have to show only the validity of at least one  of the three conditions of Theorem \ref{t23}.
\end{proof}

\begin{rem}
Theorems \ref{t2},\ \ref{t3'}, \ \ref{t22}, \  \ref{t23}, \  \ref{t24} and \ref{t25} carry fixed point  results  of Zamfirescu \cite{A1} in  metric spaces to multiplicative metric spaces.
\end{rem}

\begin{rem}
 Corollaries \ref{c1'}, \ref{c2'} and \ref{c3'}  carry some fixed point theorems   of  Edelstein\cite{eldesten},  Singh\cite{singh}  and  Chatterjea\cite{chettar} in metric spaces  to multiplicative metric spaces respectively, and  extent some results of "{O}zavsar and Cevikel\cite{A} in multiplicative metric spaces.
 \end{rem}


\begin{theorem}\label{th3}
  Let (M,d) be a complete multiplicative metric space and let \\$T : M \rightarrow M $ be a function such that for all $u , v \in M$
  \begin{equation}\label{eq1}
     d(Tu,Tv)\leq \frac{(d(u,Tu).d(v,Tv))^\frac{1}{2}}{\varphi(d(u,Tu),d(v,Tv))}.
  \end{equation}
   Where $\varphi:[1,\infty)\times[1,\infty)\rightarrow [1,\infty)$ is continuous and $\varphi (x,y)=1 $ if and only if $x=y=1$.
   Then $T$ has a unique fixed point in $M$.
\end{theorem}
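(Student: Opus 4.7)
The plan is to implement the standard Picard iteration adapted to the multiplicative setting. Fix $x_0\in M$ and set $x_{n+1}=Tx_n$; if ever $x_n=x_{n+1}$ we are done, so I assume consecutive iterates are distinct. Applying the contractive condition \eqref{eq1} with $u=x_n$, $v=x_{n+1}$ and using $\varphi\geq 1$ yields $d(x_{n+1},x_{n+2})\leq (d(x_n,x_{n+1})\cdot d(x_{n+1},x_{n+2}))^{1/2}$, which on multiplicative squaring collapses to $d(x_{n+1},x_{n+2})\leq d(x_n,x_{n+1})$. Hence $a_n:=d(x_n,x_{n+1})$ is monotonically decreasing and bounded below by $1$, so $a_n\downarrow a$ for some $a\geq 1$. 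Passing to the limit in the stronger bound $a_{n+1}\cdot\varphi(a_n,a_{n+1})\leq (a_n\cdot a_{n+1})^{1/2}$ and invoking continuity of $\varphi$ forces $a\cdot\varphi(a,a)\leq a$, so $\varphi(a,a)\leq 1$. Combined with $\varphi\geq 1$ this gives $\varphi(a,a)=1$, whence $a=1$ by the hypothesis on $\varphi$.

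The step I expect to be the main obstacle is the Cauchy property of $\{x_n\}$. I would argue by contradiction: if $\{x_n\}$ is not multiplicative Cauchy, there exist $\epsilon>1$ and indices $m_k>n_k\geq k$ with $d(x_{m_k},x_{n_k})\geq\epsilon$. Applying \eqref{eq1} at the preceding iterates $u=x_{m_k-1}$, $v=x_{n_k-1}$ gives
\[
d(x_{m_k},x_{n_k})\leq\frac{\bigl(d(x_{m_k-1},x_{m_k})\cdot d(x_{n_k-1},x_{n_k})\bigr)^{1/2}}{\varphi\bigl(d(x_{m_k-1},x_{m_k}),\,d(x_{n_k-1},x_{n_k})\bigr)}.
\]
Since $a_n\to 1$ and $\varphi$ is continuous with $\varphi(1,1)=1$, the right-hand side tends to $1$, contradicting $d(x_{m_k},x_{n_k})\geq\epsilon>1$. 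Completeness of $M$ then produces a limit $z\in M$ of $\{x_n\}$.

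To identify $z$ as a fixed point, I would apply \eqref{eq1} with $u=x_n$, $v=z$ and let $n\to\infty$; using joint continuity of the multiplicative metric (an immediate consequence of the multiplicative triangle inequality), the fact that $d(x_n,x_{n+1})\to 1$, and continuity of $\varphi$, one obtains
\[
d(z,Tz)\leq\frac{d(z,Tz)^{1/2}}{\varphi(1,d(z,Tz))},\qquad\text{i.e.}\qquad d(z,Tz)^{1/2}\cdot\varphi(1,d(z,Tz))\leq 1.
\]
Since both factors are $\geq 1$, each must equal $1$, so $d(z,Tz)=1$ and $Tz=z$. For uniqueness, if $Tz'=z'$ were another fixed point, then \eqref{eq1} applied at $u=z$, $v=z'$ collapses its right-hand side to $(1\cdot 1)^{1/2}/\varphi(1,1)=1$, forcing $d(z,z')=1$ and hence $z=z'$. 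The conceptual heart of the proof is therefore the Cauchy step, where one must feed the vanishing of consecutive distances through \eqref{eq1} applied at the previous indices and exploit the strict positivity condition on $\varphi$; the remaining steps are routine limit arguments of the same flavour.
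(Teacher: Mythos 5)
Your proof is correct, but it takes a noticeably shorter route than the paper's. The monotonicity argument and the limit $d(x_n,x_{n+1})\to 1$ (your first paragraph) coincide with the paper's Step 1, and your uniqueness argument is identical to its Step 5. The divergence is in the middle. The paper proves $d(u_n,u_{n+2})\to 1$ (Step 2), then runs a lengthy Step 3 that establishes the Cauchy property via a carefully chosen pair $m(q)>n(q)$ with $\lim_q d(u_{m(q)},u_{n(q)})=\epsilon$, splits into two cases to produce a \emph{periodic} point, and finally converts the periodic point into a fixed point by a descent argument (Step 4). You bypass all of this: since \eqref{eq1} holds for \emph{all} pairs $u,v$, applying it at $u=x_{m-1}$, $v=x_{n-1}$ gives $d(x_m,x_n)\leq\bigl(d(x_{m-1},x_m)\cdot d(x_{n-1},x_n)\bigr)^{1/2}\to 1$, so the sequence is Cauchy essentially for free (your contradiction framing is not even needed), and the limit $z$ is identified as a fixed point by passing to the limit in \eqref{eq1} with $u=x_n$, $v=z$, using the continuity of the multiplicative metric that follows from the multiplicative triangle inequality. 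That continuity is exactly what is unavailable in the Branciari-type generalized metric spaces of Aydi et al., whose proof the paper is transplanting; the rectangular inequality there forces the detour through periodic points and the case analysis on whether the iterates hit $w$ or $Tw$. In a genuine multiplicative metric space the full triangle inequality holds, so your direct argument is valid and arguably the more natural one; what the paper's longer route buys is only fidelity to the template it is generalizing, not extra generality in this setting.
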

\begin{proof}
    Let $u_0\in M $ be an arbitrary point. Clearly by induction a sequence $\{ u_n\}$ can be constructed such that
\begin{equation}\label{eq2}
      u_{n+1}=Tu_n=T^{n+1}u_0\hspace{1cm} \hbox{for} \hspace{1mm} \hbox{all} \hspace{1mm} n\geq 0.
\end{equation}
If for some natural number $n\ , \ u_n=u_{n+1}$ then clearly $u_n$ is fixed point of $T$, which complete the proof.

 Suppose $u_n\neq u_{n+1}$ \hbox{for all} $n\in N$. To discuss this case  we executing the following steps:

 \textbf{Step 1.} We claim that
 \begin{equation}\label{eq3} \lim_{n\to \infty}d(u_n,u_{n+1})= 1.\end{equation}
Putting  $u=u_n$ and $v= u_{n-1}$ in \eqref{eq1}, and using the  properties of $\varphi$ function  we have
\begin{eqnarray}\label{eq4}
 \nonumber d(u_{n+1},u_n)
 \nonumber &\leq & \frac{\big(d(u_n,Tu_n).d(u_{n-1},Tu_{n-1})\big)^\frac{1}{2}}{\varphi \big(d(u_n,Tu_n),d(u_{n-1},Tu_{n-1})\big)} \\ &= &  \frac{\big(d(u_n,u_{n+1}).d(u_{n-1},u_n)\big)^\frac{1}{2}}{\varphi \big(d(u_n,u_{n+1}),d(u_{n-1},u_n)\big)}\\\nonumber &\leq &\big(d(u_n,u_{n+1}).d(u_{n-1},u_n)\big)^\frac{1}{2}\hspace{1cm} \\\nonumber
\end{eqnarray}
 which implies that $$ d(u_{n+1},u_n) \leq d(u_n, u_{n-1})\hspace{5mm} \text{for all}\hspace{1mm} n\geq 1.$$
Therefore $\{d(u_n, u_{n+1})\}$ is a sequence which is monotonically non-increasing and bounded below by 1. So, there must be some $r\geq1$ such that
\begin{equation}
\nonumber \lim_{n\to \infty}d(u_n, u_{n+1})=r.
\end{equation}
 Letting $n\to\infty$ in \eqref{eq4} and using the continuity of $\varphi$, we get $ r\leq \frac {(r.r)^\frac{1}{2}}{\varphi(r,r)} ,$
 which implies that
$ \varphi(r,r) \leq1,  \   \hbox{ which shoes that } $ r=1$.$

  Thus \eqref{eq3} is proved.

  \textbf{Step 2.} Next we shall prove that
 \begin{equation}\label{eq5}
 \lim_{n\to \infty}d(u_n,u_{n+2})= 1.
 \end{equation}
 From  \eqref{eq1}
  we have.\begin{multline}\nonumber d(u_{n+2},u_n)\leq \frac{\big(d(u_{n+1},Tu_{n+1}).d(u_{n-1},Tu_{n-1})\big)^\frac{1}{2}}{\varphi\big(d(u_{n+1},Tu_{n+1}),d(u_{n-1},Tu_{n-1})\big)}\\\nonumber = \frac{\big(d(u_{n+1},u_{n+2}).d(u_{n-1},u_n)\big)^\frac{1}{2}}{\varphi\big(d(u_{n+1},u_{n+2}),d(u_{n-1},u_n)\big)}\nonumber \leq \big(d(u_{n+1},u_{n+2}).d(u_{n-1},u_n)\big)^\frac{1}{2}.\hspace{1cm} \end{multline}
 Taking limit $n\to\infty$  and  using   \eqref{eq3} we get
 $$\lim_{n\to\infty}d(u_{n+2},u_n) = 1.$$

 So \eqref{eq5} is proved.

  \textbf{Step 3.} In this step we show  that $T$ has a periodic point. Suppose by the way of contradiction that $T$ has no periodic point, then $\{u_n\}$ is sequence of distinct points,  that is $u_n\neq u_m \text{ for all } n\neq m.$   In this case $\{u_n\}$ is a multiplicative Cauchy sequence. If we suppose that $\{u_n\}$ is not  multiplicative Cauchy  then there will exists  some $\epsilon> 1$ such that for an integer $q$ there exist integers $m(q)>n(q)>q$ such that
  \begin{eqnarray}\label{eq6}d(u_{m(q)},u_{n(q))}>\epsilon. \end{eqnarray}

  For every positive integer $ q \text{, let }m(q) \text{ be the least positive integer exceeding } n(q )$ and  satisfying \eqref{eq6} then  \begin{eqnarray}\label{eq7}d(u_{n(q)},u_{m(q)-1)}\leq\epsilon.\end{eqnarray}
From  \eqref{eq6},\eqref{eq7} and multiplicative triangular inequality we have.
\begin{eqnarray} \nonumber\epsilon< d(u_{m(q)},u_{n(q))}&\leq& d(u_{m(q)},u_{m(q)-2}).d(u_{m(q)-2},u_{m(q)-1}).d(u_{m(q)-1},u_{n(q)})\\\nonumber &\leq& d(u_{m(q)},u_{m(q)-2}).d(u_{m(q)-2},u_{m(q)-1}).\epsilon
\end{eqnarray}
Taking limit as $n\rightarrow\infty$ and using \eqref{eq3} and \eqref{eq5} one can get
\begin{equation}\label{eq8}\nonumber \epsilon< \lim_{q\to \infty}d(u_{m(q)},u_{n(q))}\leq\epsilon.
\end{equation}
Thus
\begin{equation}\label{eq9}
  \lim_{q\to \infty}d(u_{m(q)},u_{n(q))} =\epsilon.
\end{equation}
Putting  $ u= u_{m(q)-1} \text{ and } v= u_{n(q)-1}$ in equation \eqref{eq1}, we get
\begin{equation}\label{eq9'}
  d(u_{m(q)},u_{n(q))}\leq \frac{\big(d(u_{m(q)-1},u_{m(q)}).d(u_{n(q)-1},u_{n(q)})\big)^\frac{1}{2}}{\varphi\big(d(u_{m(q)-1},u_{m(q)}),d(u_{n(q)-1},u_{n(q)})\big)}.
\end{equation}
 Letting  $ q \to \infty$ in \eqref{eq9'}and using \eqref{eq3}, \eqref{eq8} and continuity of $\varphi$ we have
 \begin{eqnarray*}
 \epsilon \leq \frac{1}{\varphi(1,1)}= 1.
   \end{eqnarray*}
 Which is a contradiction. Hence $\{u_n\}$ is a multiplicative Cauchy sequence. Since $(M,d)$ is a complete multiplicative metric space , so there will be some $w\in M $ such that $u_n\to w $. Substituting $u=u_n \text{ and } v=w $ in equation \eqref{eq1} we get
 \begin{equation}\label{eq9}
 d(u_{n+1},Tw) \leq \frac{\big(d(u_n,u_{n+1}).d(w,Tw)\big)^\frac{1}{2}}{\varphi\big(d(u_n,u_{n+1}),d(w,Tw)\big)}
 \end{equation}
 Since $\varphi(x,y)\geq 1$, therefore
 \begin{equation*} d(u_{n+1},Tw)\leq \big(d(u_n,u_{n+1}).d(w,Tw)\big)^\frac{1}{2}.
 \end{equation*}Letting $n\to \infty$ and using \eqref{eq3} we obtain
 \begin{equation}\label{eq10}\lim_{n\to \infty}d(u_{n+1},Tw)\leq \{d(w,Tw)\}^\frac{1}{2}.
 \end{equation}

 Now we shall find a contradiction to the assumption that $T$ has no periodic point in each of the following two cases.

  $\textbf{Case 1.}$  If $ u_n\neq w \text{ and } u_n\neq Tw \text{ for all } n\geq 2, $ Then using multiplicative triangular inequality we have
 \begin{equation*}\nonumber  d(w,Tw)\leq d(w,u_n)\cdot d(u_n,u_{n+1})\cdot d(u_{n+1},Tw).
 \end{equation*} Letting  $n\to \infty $ and using \eqref{eq3} we have

 \begin{eqnarray}\label{eq11} d(w,Tw)&\leq&\lim_{n\to \infty}d(u_{n+1},Tw).
 \end{eqnarray}
  From \eqref{eq10} and \eqref{eq11} it follows that
 \begin{equation*}
 \ d(w,Tw)\leq\{d(w,Tw)\}^\frac{1}{2},
 \end{equation*}
   which is possible only if $d(w,Tw)=1$, which shoes that $ Tw= w $that is, $w$ is fixed point of $T$, so $w$ is periodic point of $T$. Which is a contradiction to the assumption that $T$ has no periodic point.

   $\textbf{Case 2.}$ $ \text{ If for some } p\geq 2, u_p=w \text{ or } u_p=Tw .$ Since $T $ has no periodic point, therefore $w\neq u_0$ because otherwise $u_p=w =u_0\Rightarrow T^pu_0=u_0$ that is $u_0$ is periodic point of $T$.
   Also if $u_p=Tw \text{ and } w= u_0 \text{ then } Tu_0 =Tw= u_p=T^pu_0=T^{p-1}(Tu_0)$, i.e, $Tu_0$ is periodic point of $T$.
    Hence in either case we have a contradiction to the fact that $T$ has no periodic point. Now for all $n\geq 1$, we have
   \begin{equation*} d(T^nw,w)=d(T^nu_p,w)=d(u_{n+p},w)\  \ \text{   or}
   \end{equation*}
   \begin{equation*}d(T^nw,w)=d(T^{n-1}Tw,w)=d(T^{n-1}u_p,w)=d(u_{n+p-1},w).
   \end{equation*} \\In the above two identities the integer $ p\geq 2$ is fixed, so $\{ u_{n+p}\}$ and $\{ u_{n+p-1}\}$ are subsequence of $\{ u_n\},$ and as $\{u_n\}$ is multiplicative sequence converging to $w$ in multiplicative metric space, so these two subsequences are also multiplicative convergent to the same unique limit $w$, i.e
   \begin{eqnarray}\nonumber \lim_{n\to\infty}d(u_{n+p},w)= \lim_{n\to\infty}d(u_{n+p-1},w)=1
   \end{eqnarray} Thus
   \begin{eqnarray}\label{eq12}\lim_{n\to\infty}d(T^nw,w)=1.
   \end{eqnarray} Also, since $\{T^{n+1}w\}$  is subsequence of  $\{T^nw\}$, therefore \begin{eqnarray}\label{eq13}\lim_{n\to\infty}d(T^{n+1}w,w)=1.
   \end{eqnarray}
   According to supposition  $T$ has no periodic point, therefor $T^rw\neq T^sw $ for any $r,s\in N$ where $r\neq s$. Using multiplicative reverse triangular inequality we have
   \begin{eqnarray*}\frac{1}{d(T^{n+1}w,w)}\leq\frac{d(T^{n+1}w,Tw)}{d(w,Tw)}\leq d(T^{n+1}w,w).
   \end{eqnarray*}
   Using \eqref{eq13} we have
   \begin{eqnarray*}\label{eq14}\nonumber\lim_{n\to \infty} \frac{1}{d(T^{n+1}w,w)}&\leq&\lim_{n\to \infty}\frac{d(T^{n+1}w,Tw)}{d(w,Tw)}\leq \lim_{n\to \infty}d(T^{n+1}w,w) \\\nonumber 1&\leq&\lim_{n\to \infty}\frac{d(T^{n+1}w,Tw)}{d(w,Tw)}\leq 1
   \end{eqnarray*}
   Thus
   \begin{equation}\label{eq13}
     \lim_{n\to \infty}d(T^{n+1}w,Tw)= d(w,Tw).
   \end{equation}
  Now by \eqref{eq1}
   \begin{eqnarray*} d(T^{n+1}w,Tw)\leq \frac{\{d(T^nw,T^{n+1}w)\cdot d(w,Tw)\}^\frac{1}{2}}{\varphi\{d(T^nw,T^{n+1}w),d(w,Tw)\}}
   \end{eqnarray*}
   Now letting $n\to \infty$ and using \eqref{eq3} and \eqref{eq14} we have,
   \begin{equation*}  d(w,Tw)\leq \frac{\{d(w,Tw)\}^\frac{1}{2}}{\varphi\{1,d(w,Tw)\}}\leq \{d(w,Tw)\}^\frac{1}{2}\hspace{2cm}
   \end{equation*}
        Which is true only if $d(w,Tw)=1$, and hence  $Tw =w$
  That is $w$ is periodic point of $T$. It contradicts the fact that $T$ has no periodic point.

  Thus   $T$ has a periodic point. It means there exists a point $w\in M $ such that $T^pw=w $ for some intger $p\geq 1.$

   \textbf{Step 4.} Here we will prove that $T$ has fixed point. Sine $T$ has  periodic point therefore  $T^pw=w $ for some intger $p\geq 1$ and $w\in M$. Now if $p=1$ then $Tw=w$ that is $w$ is fixed point of $T$.

     Assume that $p>1$. We will show that $a=T^{p-1}w$ is fixed point of $T$. Let us suppose by the way of contradiction that $T^{p-1}w\neq T^pw.$
     Then clearly  $ d(T^{p-1}w,T^pw)>1$ and   $\varphi(d(T^{p-1}w,T^pw),d(T^{p-1}w,T^pw))>1$.
     \\ By using  \eqref{eq1} we get
   \begin{multline}\label{eq15} \nonumber d(w,Tw)=d(T^pw,T^{p+1}w)\nonumber=
     d(T(T^{p-1}w),T(T^pw))\\\nonumber \leq \frac{\big(d(T^{p-1}w,T^pw).d(T^pw,T(T^pw))\big)^\frac{1}{2}}{\varphi\big(d(T^{p-1}w,T^pw),d(T^pw,T(T^pw))\big)}\nonumber < \big(d(T^{p-1}w,T^pw).d(T^pw,T(T^pw))\big)^\frac{1}{2}.\\
     =\big(d(T^{p-1}w,T^pw).d(w,Tw)\big)^\frac{1}{2}
  \end{multline}
  Since $d(w,Tw)< d(w,Tw)^\frac{1}{2}$ is not possible therefore we have
  \begin{equation}\label{eq15}
     d(w,Tw)<d(T^{p-1}w,T^pw)
  \end{equation}
  Again using \eqref{eq1} we get
  \begin{multline}\label{eq16}\nonumber d(T^{p-1}w,T^pw)= d(T(T^{p-2}w),T(T^{p-1}w))\\\nonumber \leq \frac{\big(d(T^{p-2}w,T^{p-1}w)\cdot d(T^{p-1}w,T(T^{p-1}w))\big)^\frac{1}{2}}{\varphi\big(d(T^{p-2}w,T^{p-1}w),d(T^{p-1}w,T(T^pw))\big)}\\\nonumber < \big(d(T^{p-2}w,T^{p-1}w)\cdot d(T^{p-1}w,T(T^{p-1}w))\big)^\frac{1}{2}\\\nonumber = \big(d(T^{p-2}w,T^{p-1}w)\cdot d(T^{p-1}w, T^pw)\big)^\frac{1}{2}
  \end{multline}
  Finally we have
  \begin{equation}\label{eq16}
 d(T^{p-1}w,T^pw)< d(T^{p-2}w,T^{p-1}w).
  \end{equation}
  Continuing in this way as in \eqref{eq15} and \eqref{eq16} we get
  \begin{equation*}d(w,Tw)<d(T^{p-1}w,T^pw)< d(T^{p-2}w,T^{p-1}w)< d(T^{p-3}w,T^{p-2}w)<\cdots < d(w,Tw).
  \end{equation*}Which is a contradiction. Hence we conclude that $w=T^{p-1}w$ is a fixed point of $T$.

  \textbf{Step 5.}  In this step we show the uniqueness of the fixed point.
  Suppose that fixed point of $T$ is not unique. Rather $a,b\in M$  where $a\neq b$ are two distinct fixed points of $T$ , i.e $Ta=a$  and  $Tb=b $. Then using \eqref{eq1} we have
  \begin{equation*}
  d(a,b)\leq \frac{\big(d(a,Ta)\cdot d(b,Tb)\big)^\frac{1}{2}}{\varphi\big(d(a,Ta),d(b,Tb)\big)}
  =\frac{(\big(d(a,a)\cdot d(b,b)\big))^\frac{1}{2}}{\varphi\big(d(a,a),d(b,b)\big)}=1
\end{equation*}
which shoes that  $a=b$

 Hence $T$ has unique fixed point. This completes the proof.
\end{proof}
The above theorem produces the following corollaries:
\begin{cor}\label{cc7}
 Let $(M,d)$ be a complete multiplicative metric space. Let $T:M\to M$ be such that for all $u,v\in M$ there exists $q\in[0,1)$ and $d(Tu,Tv)\leq(d(u,Tu)\cdot d(v,Tv))^\frac{q}{2}.$ Then $T$ has unique fixed point.
\end{cor}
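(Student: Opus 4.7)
The plan is to deduce Corollary \ref{cc7} directly from Theorem \ref{th3} by producing a function $\varphi$ that turns the hypothesis of the corollary into the hypothesis of the theorem. Since the corollary's bound is
\[
d(Tu,Tv)\leq \bigl(d(u,Tu)\cdot d(v,Tv)\bigr)^{q/2}
=\frac{\bigl(d(u,Tu)\cdot d(v,Tv)\bigr)^{1/2}}{\bigl(d(u,Tu)\cdot d(v,Tv)\bigr)^{(1-q)/2}},
\]
the natural choice is $\varphi(x,y)=(xy)^{(1-q)/2}$. So first I would write this algebraic rearrangement explicitly, making use of the fact that $q\in[0,1)$ forces the exponent $(1-q)/2\in(0,1/2]$ to be strictly positive.

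Next I would verify that this $\varphi$ satisfies the three requirements imposed on the auxiliary function in Theorem \ref{th3}. Continuity of $\varphi:[1,\infty)\times[1,\infty)\to\mathbb{R}$ is immediate since it is a composition of continuous maps. For $x,y\in[1,\infty)$ we have $xy\geq 1$, and raising to a nonnegative power preserves this, so $\varphi(x,y)\geq 1$ and the codomain is as required. Finally, $\varphi(x,y)=1$ iff $(xy)^{(1-q)/2}=1$ iff $xy=1$ (here $(1-q)/2>0$ is essential), and since both factors are at least $1$, this forces $x=y=1$.

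With these checks complete, the hypothesis of the corollary becomes precisely the contraction inequality \eqref{eq1} of Theorem \ref{th3} for the chosen $\varphi$. Applying that theorem then yields the existence and uniqueness of a fixed point of $T$.

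I do not anticipate any substantive obstacle: the work is purely a verification that $(xy)^{(1-q)/2}$ meets the axioms required of $\varphi$. The only subtlety worth flagging is that the argument breaks if $q=1$, because then the exponent vanishes and $\varphi\equiv 1$ would violate the ``iff'' clause; this is exactly why the hypothesis restricts $q$ to the half-open interval $[0,1)$.
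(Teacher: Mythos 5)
Your proposal matches the paper's own proof: the paper likewise sets $\varphi(x,y)=(x\cdot y)^{\frac{1-q}{2}}$, rewrites the corollary's inequality in the form required by Theorem \ref{th3}, and invokes that theorem. Your additional verification that this $\varphi$ is continuous, maps into $[1,\infty)$, and equals $1$ only at $(1,1)$ (using $q<1$) is a correct and slightly more careful rendering of the same argument.
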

\begin{proof} Taking $\varphi(x,y)= (x\cdot y)^\frac{1-q}{2}  $ in Theorem \ref{th3} we have
\begin{eqnarray*}
       d(Tu,Tv)&\leq& \frac{\big(d(u,Tu).d(v,Tv)\big)^\frac{1}{2}}{\varphi\big(d(u,Tu),d(v,Tv)\big)}\\&=& \frac{\big(d(u,Tu)\cdot d(v,Tv)\big)^\frac{1}{2}}{\big(d(u,Tu)\cdot d(v,Tv)\big)^\frac{1-q}{2}}\\&=&\big(d(u,Tu)\cdot d(v,Tv)\big)^\frac{q}{2}.
 \end{eqnarray*}
    Hence Theorem \ref{th3} completes the proof.
 \end{proof}
 \begin{cor}\label{cc8}
 Let $(M,d)$ be a complete multiplicative metric space and let $T : M \rightarrow M $ be a function such that for all $u , v \in M$
\begin{equation}\label{eq18}
  d(Tu,Tv)\leq \frac{\big(d(u,Tu).d(v,Tv)\big)^\frac{1}{2}}{\psi\big(d(u,Tu)\cdot d(v,Tv)\big)^\frac{1}{2}}.
\end{equation}
Where $\psi:[1,\infty)\rightarrow [1,\infty)$ is continuous and $\psi^{-1}(1)=1 $
Then $T$ has a unique fixed point in $M$.
 \end{cor}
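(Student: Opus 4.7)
The strategy is simply to reduce Corollary \ref{cc8} to the already proven Theorem \ref{th3} by choosing the right auxiliary function. Namely, I would define
\begin{equation*}
\varphi:[1,\infty)\times[1,\infty)\to[1,\infty),\qquad \varphi(x,y)=\bigl(\psi(x\cdot y)\bigr)^{\frac{1}{2}},
\end{equation*}
and show that $\varphi$ satisfies all the hypotheses imposed on the auxiliary function in Theorem \ref{th3}, after which the contractive inequality in Corollary \ref{cc8} becomes literally the inequality \eqref{eq1}.

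The verification has three easy checks. First, since $\psi$ takes values in $[1,\infty)$, its square root does too, so $\varphi$ is well-defined into $[1,\infty)$. Second, continuity of $\varphi$ follows from the continuity of $\psi$, of the product $(x,y)\mapsto x\cdot y$, and of the square-root function on $[1,\infty)$. Third, for the equality characterization, observe that $\varphi(x,y)=1$ iff $\psi(xy)=1$, which by the hypothesis $\psi^{-1}(1)=1$ holds iff $xy=1$; since $x,y\geq 1$, this forces $x=y=1$. Hence $\varphi$ meets every requirement of Theorem \ref{th3}.

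With this choice of $\varphi$, the right-hand side of \eqref{eq1} becomes
\begin{equation*}
\frac{\bigl(d(u,Tu)\cdot d(v,Tv)\bigr)^{\frac{1}{2}}}{\varphi\bigl(d(u,Tu),d(v,Tv)\bigr)}
=\frac{\bigl(d(u,Tu)\cdot d(v,Tv)\bigr)^{\frac{1}{2}}}{\bigl(\psi(d(u,Tu)\cdot d(v,Tv))\bigr)^{\frac{1}{2}}},
\end{equation*}
which is exactly the right-hand side of \eqref{eq18}. Therefore the contractive assumption of Corollary \ref{cc8} is precisely the contractive assumption of Theorem \ref{th3} for the chosen $\varphi$, and Theorem \ref{th3} immediately delivers the existence and uniqueness of a fixed point of $T$ in $M$.

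There is essentially no obstacle here; the only thing one has to be careful about is the equality characterization $\varphi(x,y)=1\Leftrightarrow x=y=1$, which is why the single-variable condition $\psi^{-1}(1)=1$ must be combined with the constraint $x,y\geq 1$ to rule out the spurious possibility $xy=1$ with $x\neq 1\neq y$.
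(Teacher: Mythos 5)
Your proposal is correct and follows essentially the same route as the paper: define $\varphi(x,y)=\psi(x\cdot y)^{\frac{1}{2}}$, verify continuity and the characterization $\varphi(x,y)=1\Leftrightarrow x=y=1$ (using $x,y\geq 1$ together with $\psi^{-1}(1)=1$), and then invoke Theorem \ref{th3}. Your write-up is in fact a bit more careful than the paper's about where the hypothesis $x,y\geq 1$ is needed to exclude $xy=1$ with $x\neq 1$.
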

 \begin{proof}
 Here $\varphi(s,t)=\psi(s\cdot t)^\frac{1}{2}$. Clearly $\varphi(s,t)$ is continuous, and $\varphi(s,t)=1\Leftrightarrow \psi(s\cdot t)^\frac{1}{2}=1$ $\Leftrightarrow (s\cdot t)^\frac{1}{2}=\psi^{-1}(1)=1\Leftrightarrow s=t=1.$ That is $\varphi(s,t)$ satisfies hypotheses of the Theorem \ref{th3}. Hence proof follows from Theorem \ref{th3}.
 \end{proof}

  \begin{rem}
  Theorem \ref{th3}, Corollaries  \ref{cc7} and \ref{cc8} carry fixed point  results of Aydi \textit{et al} \cite{2} in complete Hausdorff generalized metric spaces to multiplicative metric spaces.
\end{rem}

 We conclude with the following supporting examples.

\begin{ex} Consider multiplicative metric space $(R^2,d)$, with multiplicative metric $d$ defined as $$d((x_1,y_1),(x_2,y_2))=2^{\sqrt{(x_1-x_2)^2+(y_1-y_2)^2}}$$.
Then the mapping $T:R^2\to R^2$ defined as $T((x,y))=\frac{2}{3}(x,y)$ satisfies the following multiplicative contraction:
$$d(T(x_1,y_1),T(x_2,y_2))= d((x_1,y_1),(x_2,y_2))^\xi\ \  \hbox{for all}\ \   (x_1,y_1), (x_2,y_2)\in R^2, $$ where  $\xi=\frac{2}{3}\in [0,1)$.That is condition (1) of Theorem \ref{t2} is satisfied. One can easily check that $T$ has unique fixed point $(0,0)\in R^2$.
\end{ex}
\begin{ex} Let $M=[0.1,1]$. Consider the multiplicative metric $d:M\times M\to R$ defined by
 $$d(x,y)= e^{|\frac{1}{x}-\frac{1}{y}|}.$$
Then $(M,d)$ is complete multiplicative metric space. Let $T: M\to M$ be a mapping given by
$T(x)= \frac{1}{2+x}.$
 Then the mapping holds the following multiplicative contraction conditions  (2) and (3) of Theorem \ref{t2}   respectfully
  $$d(T(x),T(y))\leq (d(x,T(x))\cdot d(y,T(y)))^\eta   \ \ for all\ \  x,y \in X$$
  $$d(T(x),T(y))\leq (d(x,T(y))\cdot d(y,T(x)))^\lambda \ \  for all\ \   x,y \in X$$
  with $\eta=\lambda=0.499 \in [0,\frac{1}{2})$. Obviously $T$ has unique fixed point $0.4142135624 \in X$
\end{ex}

\begin{ex} Let $M=[1,2]$. Consider the multiplicative metric $d:M\times M\to R$ defined by
$$d(x,y)= 2^{|x-y|}.$$
Then $(M,d)$ is complete multiplicative metric space. Let $T: M\to M$ be a mapping given by
$T(x)= \frac{1}{\sqrt{x}}.$ and define $\varphi$  as
 $$\varphi(x,y) = 2^{(|x-\frac{1}{\sqrt{x}}|+|y-\frac{1}{\sqrt{y}}|)\cdot 10^{-5}}.$$
  Then $T$ satisfies all the  condition of the Theorem \ref{th3}. i.e.
 \begin{equation*}d(Tx,Ty)\leq \frac{\{d(x,Tx).d(y,Ty)\}^\frac{1}{2}}{2^{\big(|x-\frac{1}{\sqrt{x}}|+|y-\frac{1}{\sqrt{y}}|\big)\cdot 10^{-5}}} \text{  for all } x,y \in M.
 \end{equation*}
  So  $T$ has unique fixed point which is  $1\in M.$
\end{ex}

{\bf Muhammad Sarwar}\\
Department of Mathematics
University of Malakand, Chakdara Dir(L), Khyber PakhtunKhwa, Pakistan.\\
Email: sarwar@uom.edu.pk, sarwarswati@gmail.com\\
{\bf Badshah-e-Rome}\\
Department of Mathematics
University of Malakand, Chakdara Dir(L), Khyber PakhtunKhwa, Pakistan.\\
Email: baadeshah$@$yahoo.com


\begin{thebibliography}{20}
\vskip+0.2cm


\bibitem{B4}R. P. Agarwal,  M. A.  El-Gebily,  and D. O’Regan,  Generalized contractions in partially ordered metric spaces, \textit{Appl. Anal}. \textbf{87},109--116 (2008)


\bibitem{2}H.  Aydi,  E.  Karapinar, H.  Lakzian, Fixed point results on a class of generalized
metric spaces, \textit{J. Math. Sci}. \textbf{1}, 6–-46 (2012)

\bibitem{mutip} A.E. Bashirov, E.M. Kurpınar and A. Ozyapıcı, Multiplicative calculus and its applications,
J. Math.Analy. App., 337(2008) 36-48.


\bibitem{1}A. Branciari,  A fixed point theorem of Banach-Caccioppoli type on a class
of generalized metric spaces, \textit{Publ. Math. Debrecen.} (\textbf{57}) 31–-37 (2000)


\bibitem{chettar} S. K.  Chatterjea,  Fixed point theorem. \textit{C. R. Acad. Bulgare Sci}. \textbf{(25)} 727-730 (1972)




\bibitem{B5}B. C. Dhage,  On common fixed points of pairs of coincidentally commuting mappings in D-metric spaces, \textit{Indian J. Pure
Appl. Math.} \textbf{30}(4), 395--406 (1999)

  \bibitem{eldesten}  M. Edelstein, On fixed  and  periodic points under contractive  mappings, \textit{J.  London Math.}
Soc. \textbf{(37)},  74--79  (1962).

\bibitem{B1}F. Gu  L. M. Cui, and Y. H, Wu, Some fixed point theorems for new contractive type mappings. \textit{J. Qiqihar Univ.} \textbf{19}, 85--89
(2003)

 \bibitem{He} H.  Hxiaoju , M.  Song M and D.  Chen,  Common fixed points for weak commutative mappings on a multiplicative metric
space. \textit{Fixed Point Theory and Applications} 2014, 2014:48


\bibitem{F} S. A.  Mohiuddine, M.  Cancan and H. Sevli,  Intuitionistic fuzzy stability of a Jensen functional
equation via fixed point technique, \textit{Math. Comput. Model.} \textbf{54 }(2011), 2403--2409.

\bibitem{B7} Z.  Mustafa, B. Sims,  Some remarks concerning D-metric spaces. Proceedings of
International Conference on Fixed Point Theory and Applications, \textit{Yokoham Publishers,
Valencia Spain,} July 13-19, (2004), 189–198. MR 2140217

\bibitem{B8} Z. Mustafa, Z,  B. Sims,  A new approach to a generalized metric spaces. \textit{J. Nonlinear
Convex Anal}. 7(2) (2006), 289–297. MR2254125 (2007f:54049)

\bibitem{C} S. B. Nadler, Multivalued nonlinear contraction mappings, \textit{Pacific J.Math. }\textbf{30}(1969) 475--488


\bibitem{A}M. \H{O}zavsar and A. C.  Cevikel,  Fixed point of multiplicative contraction mappings on multiplicative metric space.
arXiv:1205.5131v1 [matn.GN] (2012).

\bibitem{B3}K. P. R. Sastry,  G. A.  Naidu, K. K. M, Sharma, I. Laxmi Gayatri, Common fixed point for compatible mappings on 2-metric
spaces. \textit{Glob. J. Math. Sci., Theory Pract.} 2(\textbf{3}), 173--179 (2010)


 \bibitem{singh}        S. P. Singh , Some theorems on fixed points. \textit{Yokohama Math.  J.}  \textbf{(18)}, 23--25  (1970).





\bibitem{B2}S. L. Singxi, B. M. L. Tiwari, and V. K.  Gupta, Common fixed points of commuting mappings in 2-metric spaces and an
application, \textit{Math. Nachr.} \textbf{96}, 293-297 (1980)



\bibitem{E}T. Suzuki,  Subrahmanyam’s fixed point theorem, \textit{Nonlinear Analysis}, \textbf{71}(2009)1678--1683

\bibitem{B6}T. Suzuki,  A generalized Banach contraction principle that characterizes metric completeness, \textit{Proceedings of the American
Mathematical Society}, vol. \textbf{136}, no. 5, pp. 1861–1869, 2008.




\bibitem{D}W. Takahashi, Nonlinear Functional Analysis, \textit{Fixed point theory and its applications,} \textit{Yokohama
Publishers}, 2000








\bibitem{A1} T. Zamfirescu, Fix point theorems in metric spaces, \textit{Archiv der Mathematik}, 1972, Volume 23, Issue 1, pp 292-298.
\end{thebibliography}
\end{document}